\definecolor{halfgray}
{gray}{0.55}%chapter numbers will be semi
\definecolor{webgreen}
{rgb}{0,0.4,0}
\definecolor{webbrown}
{rgb}{.8,0.1,0.1}
\definecolor{red}
{rgb}{1,0,0}
\newcommand \R {{ \mathbb R}}
\def\C{{\mathbb C}}
\newcommand \Z {{ \mathbb Z}}
\newcommand \N {{ \mathbb N}}
\newcommand \D {{ \,\mathrm d}}
\newcommand*{\diff}{\mathop{}\!\mathrm{d}}
\newcommand{\one}{{\rm 1\mskip-4mu l}}
\newcommand{\norm}[1]{\left\lVert#1\right\rVert}
\newcommand{\abs}[1]{\left\lvert#1\right\rvert}
\renewcommand{\sl}{%
\operatorname{{\mathfrak s \mathfrak l}}
}
\newcommand{\PSL}{%
\operatorname{PSL}
}
\newcommand{\PSO}{%
\operatorname{PSO}
}
\renewcommand{\Mc}{%
M_0}
\newcommand{\Sc}{%
S_0}
\newcommand{\Gammac}{%
\Gamma_0}
\newcommand{\Gal}{%
\mathscr{G}}
\newcommand{\Galhat}{%
\widehat{\mathscr{G}}}
\newcommand{\harm}{%
\mathcal{H}}
\newcommand{\fund}{%
\mathcal{F}}
\newcommand{\Mchio}{%
M_{\chi_{\omega}}}
\newcommand{\Mcchio}{%
M_{\chi \chi_{\omega}}}
\newcommand{\rapid}{%
\mathscr{R}}
\newcommand{\uot}{%
\mathbf{u}_{\omega}^t}
\newcommand{\ubt}{%
\mathbf{u}^t}
\newcommand{\ub}{%
\mathbf{u}}
\DeclareMathOperator{%
\vol}{vol}
\DeclareMathOperator{\Id}{Id}
\DeclareMathOperator{\Spec}{Spec}
\DeclareMathOperator{\Ad}{Ad}
\DeclareMathOperator{\Lie}{Lie}
\newtheorem{bigthm}{Theorem} % Numbered separately, as A, B, etc.
\newtheorem{theorem}{Theorem}[section]
\newtheorem{corollary}[theorem]{Corollary}
\newtheorem{lemma}[theorem]{Lemma}
\newtheorem{proposition}[theorem]{Proposition}
\newtheoremstyle{style2}%
{7pt}{9pt}{\normalfont}%
{0pt}{\bf}{.\enspace}{0pt}%
{}
\theoremstyle{style2}
\newtheoremstyle{style3}%
{7pt}{9pt}{\normalfont}%
{0pt}{\it}{.\enspace}{0pt}%
{} %
\theoremstyle{style3}
\newtheorem{remark}[theorem]{Remark\/}
\date{\today}
\author{Livio Flaminio}
\address{Univ.  Lille, CNRS, UMR 8524 - Laboratoire Paul Painlevé,
F-59000 Lille, France}
\email
  {livio.flaminio@univ-lille.fr}
  \author{Davide Ravotti}
  \address{Universit\"{a}t Wien, Department of Mathematics \\
  Oskar-Morgenstern-Platz 1, 1090 Wien, Austria}
  \email
    {davide.ravotti@gmail.com}
    \title[Abelian covers] {Abelian covers of hyperbolic surfaces:
    equidistribution of spectra and infinite volume mixing asymptotics for
    horocycle flows}
\begin{document}
    \maketitle

    \begin{abstract}
      We consider Abelian covers of compact hyperbolic surfaces.  We
      establish an asymptotic expansion of the correlations for the
      horocycle flow on $ \Z^d $-covers, thus proving a strong form of
      Krickeberg mixing. 
      We also prove that the spectral measures around $
      0 $ of the Casimir operators on any increasing sequence of finite
      Abelian covers converge weakly to an absolutely continuous
      measure.
    \end{abstract}

\section{Introduction}%
\label{sec:intro}

Let $ \mathbb{H} $ denote the upper-half plane model of the hyperbolic
plane. Any closed Riemann surface $ S $ with a hyperbolic structure can
be realized as a quotient $ \Gamma \backslash \mathbb{H} $ of $ \mathbb{H}
$ by a discrete group of isometries~$ \Gamma $.  In turn,~$ \Gamma $ can
be identified with the fundamental group of~$ S $. The horocycle flow $
\{h_t\}_{t \in \R} $ on the unit tangent bundle $ T^1S $ of~$ S $ is the
unit speed translation along the stable leaves of the geodesic flow.
Identifying $ \Gamma $ with a co-compact discrete subgroup of $ G=\PSL_2
(\R) $ acting on $ \mathbb{H} $ by M\"{o}bius transformations, and $ T^1S
$ with $ M= \Gamma \backslash G $, the flow $ \{h_t\}_{t \in \R} $ is
the action by right-multiplication on $ M $ by the upper triangular
unipotent subgroup of $ G $.

Qualitative and quantitative properties of the horocycle flow on compact
(or finite volume) quotients $ M $ have been studied at length and are
now well-understood. When the underlying surface has infinite genus,
however, much less is known. We will be interested in the case of
Abelian covers of compact surfaces $ S $. In this setting, the only
invariant Radon measures have infinite total mass
\cite{Rat2}. Babillot and Ledrappier constructed a family of mutually
singular ergodic Radon invariant measures
\cite{BaLe}, which are are quasi-invariant under the geodesic flow and
arise from positive eigenfunctions of the Laplacian via the associated
conformal measures on the boundary of the hyperbolic plane
\cite{Bab}. By a result of Sarig
\cite{Sar}, which applies to a wider class of hyperbolic surfaces, these
are the only ergodic Radon invariant measures.
%There exist infinitely many ergodic, mutually singular Radon invariant measures:
%they were completely classified by Sarig \cite{Sar}, who showed that they
%all arise from positive eigenfunctions of the Laplacian.
Ledrappier and Sarig showed that, of these infinitely many ergodic
measures, only the volume (Haar) measure $
\vol $ satisfies a so-called \emph{generalized law of large numbers}:  a
certain Cesaro summation with weights of appropriately rescaled ergodic
integrals of any $ L^1 $ observable converges almost everywhere to a
multiple of its expected value
\cite{LeSa}.

In this paper, we are interested in the mixing properties of the
horocycle flow on Abelian covers $ M_0 $ of $ M $. The generalization of
the definition of mixing to the infinite volume setting is not
straightforward and has a long history (see, e.g.,
\cite[\S1]{Len}). One standard notion of infinite volume mixing is the
so-called \emph{Krickeberg mixing} (or \emph{local mixing}). In our
setting, proving Krickeberg mixing for $ \{h_t\}_{t \in \R} $ on $ M_0 $
means that we can find a scaling rate $ \rho(t) $ such that
\[
  \lim_{t \to \infty} \rho(t) \int_{M_0} (v \circ h_t) \cdot \overline{w}
  \, \diff
  \vol =
  \vol(v)
  \vol(\overline{w}),
\] for all smooth functions $ v,w \in \mathscr{C}^{\infty}_c(M_0) $ with
compact support. In probabilistic language, it corresponds to a local
limit theorem. Krickeberg mixing and the related local limit theorems
have been investigated for several (non-uniformly) hyperbolic systems,
see, e.g.,
\cite{AaDe, DoNa, Gou, GuHa, MeTe, SzVa, Ter} and references therein.
For the geodesic flow $ \{g_t\}_{t \in \R} $ on a $ \Z^d $-cover $ M_0 $
of $ M $, Oh and Pan
\cite{OhPan} showed that
\begin{equation}
  \label{eq:mix_geo} \lim_{t \to \infty} t^{d/2} \int_{M_0} (v \circ g_t)
  \cdot \overline{w} \, \diff
  \vol = \frac{1}{(2\pi \sigma)^{d/2}}
  \vol(v)
  \vol(\overline{w}),
\end{equation}
for all continuous, compactly supported $ v,w \in \mathscr{C}_c(M_0) $,
where $ \sigma >0 $ is a constant depending on $ M_0 $ only. Later, this
result was extended by Pan
\cite{Pan} to the case where $ M $ is a non-compact manifold of finite
volume.

In a recent work, Dolgopyat, Nandori and P\`{e}ne
\cite{DNP} strengthened the result by Oh and Pan by providing a full
asymptotic expansion of the left hand side in~\eqref{eq:mix_geo}, under
some regularity assumptions on the observables $ u $ and $ v $. All the
works
\cite{OhPan, Pan, DNP} rely on the symbolic representation of the
geodesic flow and the spectral theory of the associated transfer
operator.

For the horocycle flow, Krickeberg mixing
can be derived from \cite[Theorem 4.1]{DNP}, using the $ KAK $ decomposition of
$ \PSL_2(\R) $.  In our first main result, Theorem~\ref{thm:main1} below, we
achieve the same result by a direct method.  (Indeed, it is also possible to
provide an alternative proof of \cite[Theorem 4.1]{DNP} by adapting our approach
to the case of the geodesic flow.)\enspace One advantage of our method is that
it characterizes the constant $\sigma$ in~\eqref{eq:mix_geo} in geometric terms.
Furthermore, our direct approach clarifies that the leading terms of the
correlation function arise from the ``merging'' of the discrete series of the
compact surface into the complementary series weakly appearing in the Abelian
cover.  The subject of further investigation is to generalize our techniques to
obtain precise mixing asymptotics in the case of Abelian covers of finite
volume, non-compact surfaces.

Our methods use representation theory of $ G $. The crucial difference
with the compact (or finite volume) case is the absence of a spectral
gap. In fact, the analysis of the asymptotics of the correlations relies
on the study of the irreducible representations close to the trivial
representation, and hence on the spectrum of the Casimir operator close
to $ 0 $. 
For a sequence of increasing finite Abelian covers $ S_k $ of a given compact 
surface $ S $, the distribution of the resonances of the
Laplace-Beltrami operators on $ S_k $ in a neighbourhood of $ 0 $ has 
been studied by Jakobson, Naud and Soares in~\cite{JNS}.
They proved that the spectral measures around $ 0 $  converge weakly 
to an absolutely continuous measure.
In our second main result, Theorem~\ref{thm:main2}, we provide an alternative 
proof of their result.

%%%%%

\subsection{Abelian covers of compact hyperbolic surfaces}

We now introduce the notation we use in this paper. Let $ \Gamma $ be a
co-compact lattice in $ G=\PSL_2(\R) $, and let $ M = \Gamma \backslash
G $.  We can identify $ M $ with the unit tangent bundle of the compact
orientable hyperbolic surface $ S = \Gamma \backslash \mathbb{H} $.  We
denote by $ g \geq 2 $ its genus.

Denote by $ [\Gamma, \Gamma] $ the commutator subgroup of $ \Gamma $.
Abelian covers $ \Sc = \Gammac \backslash \mathbb{H} $ of $ S $ are in
one-to-one correspondence with intermediate subgroups $ [\Gamma, \Gamma]
\leq \Gammac \leq \Gamma $.  The quotient group $ \Gal := \Gamma /
\Gammac $ is a finitely generated Abelian group isomorphic to the
product of a free Abelian group of rank $ d $, with $ 0 \leq d \leq 2g $,
and a finite Abelian group (the \emph{torsion} of $ \Gal $).  For the
purpose of this article we may assume, by passing to a finite cover,
that $ \Gal $ has no torsion and hence $ \Gal\simeq \Z^d $.

Let $ \Mc = \Gammac\backslash G $.  Observe that, since $ \Mc \to M $ is
a normal cover, the Galois group $ \Gal $ acts on $ \Mc $ by deck
transformations; furthermore this action commutes with the action of $ G
$ by right translations on $ \Mc= \Gammac \backslash G $.  To avoid
trivialities, we shall assume henceforth that $ \Mc $ is an Abelian
cover of $ M $ with Galois group $ \Gal\simeq \Z^d $, where $ 1\le d \le
2g $.

%%%%%

\subsection{Horocycle flows} The right actions of $ G $ on $ M $ and on $
\Mc $ are generated by vector fields which we may identify with elements
of the Lie algebra $ \sl_2(\R) $ of $ G $.  We let
\[
  U =
  \begin{pmatrix}
    0 & 1 \\
    0 & 0
  \end{pmatrix}
  , \ X =
  \begin{pmatrix}
    \frac{1}{2} & 0 \\
    0 & -\frac{1}{2}
  \end{pmatrix}
  , \ \Theta =
  \begin{pmatrix}
    0 & \frac{1}{2} \\
    -\frac{1}{2} & 0
  \end{pmatrix}
  , \ Y =
  \begin{pmatrix}
    0 & \frac{1}{2} \\
    \frac{1}{2} & 0
  \end{pmatrix}
  \in \mathfrak{sl}_2(\R).
\] The horocycle flow $ \{h_t\}_{t \in \R} $ on $ \Mc $ (or on any
quotient of $ G $) is the homogeneous flow generated by $ U $, namely
the flow given by
\[
  h_t( \Gammac g) = \Gammac g \exp(tU).
\] The right actions of $ G $ on $ M $ and on $ \Mc $ preserve the
volume forms locally given by the Haar measure.  We normalise the volume
form $ \D
\vol $ so that $ \D
\vol ( X, Y, \Theta ) = 1 $.  The
volume form $ \D
\vol_0 $ on $ \Mc $ is is the pull-back of $ \D
\vol $.  As a consequence we obtain unitary representations of $ G $ on $
L^2(M):=L^2(M, \D
\vol) $ and on $ L^2(\Mc):=L^2(\Mc, \D
\vol_0) $.

\subsection{Infinite volume mixing asymptotics}

Our main result, Theorem~\ref{thm:main1}, provides an asymptotic expansion of the correlations
for the horocycle flow between smooth observables with compact support.
The first term in the expansion is usually called Krickeberg (or local)
mixing.  

\begin{bigthm}
  \label{thm:main1} There exists a constant $ \sigma(\Gammac)>0 $ such
  that the following holds.  Let $ v, w \in \mathscr{C}^\infty_c(\Mc) $.
  There exist $ (c_j)_{j \in \N} $ such that for all $ N \ge 1 $ we have
  \[
    \begin{split}
      \langle v \circ h_t, w \rangle = &\left(\frac{g-1}{2}\right)^{\frac
      {d}{2}} \sigma(\Gammac) \frac{%
      \vol(v)
      \vol(\overline{w})}{(\log t)^{\frac{d}{2}}} + \sum_{j=1}^N \frac{c_j}
      {(\log t)^{j+\frac{d}{2}}} \\
      &+ o\left( (\log t)^{-(N+\frac{d}{2})} \right).
    \end{split}
  \]
\end{bigthm}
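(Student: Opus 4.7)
The plan is to reduce the question to a family of correlation problems on compact homogeneous spaces by Fourier-decomposing $L^2(\Mc)$ along the Galois group $\Gal \simeq \Z^d$, and then to recover the $(\log t)^{-d/2}$ rate via a Laplace-method integration in the dual variable. Concretely, for each character $\chi \in \Galhat \simeq \T^d$, let $\mathcal{H}_\chi$ denote the Hilbert space of $\chi$-equivariant $L^2$ sections over $M$; since the $\Gal$-action on $\Mc$ commutes with the right $G$-action, each $\mathcal{H}_\chi$ carries a unitary representation of $G$. A compactly supported smooth $v$ decomposes as $v = \int_{\Galhat} v_\chi \diff \chi$ with $\chi \mapsto v_\chi$ smooth (in fact a trigonometric polynomial, as only finitely many $\Gal$-translates intersect a compact set), and by Plancherel
\[
\langle v \circ h_t, w \rangle = \int_{\Galhat} \langle v_\chi \circ h_t, w_\chi \rangle_{\mathcal{H}_\chi} \diff \chi.
\]

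For each $\chi$ close to but distinct from the trivial character $\chi_0$, the space $\mathcal{H}_\chi$ admits a discrete decomposition into $G$-irreducibles, among which a finite set of complementary-series components approaches the trivial representation as $\chi \to \chi_0$. A quantitative matrix-coefficient expansion of Flaminio--Forni type on $\mathcal{H}_\chi$ then yields a full asymptotic expansion of the fiber correlation in powers of $t^{-2\mu_0(\chi)}$ and $t^{-1}$, where $\mu_0(\chi)$ is the bottom of the Casimir spectrum on $\mathcal{H}_\chi$. The essential perturbative input is that $\mu_0(\chi)$ vanishes quadratically at $\chi_0$, with Hessian a positive-definite quadratic form $Q$ on $\Lie(\Galhat) \simeq \R^d$ determined by the $L^2$-pairing of harmonic $1$-forms on $S$; the normalization of $Q$, combined with $\vol(S) = 4\pi(g-1)$ via Gauss--Bonnet, is the geometric source of the factor $((g-1)/2)^{d/2}$.

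The third step performs the Laplace asymptotics. Writing $\chi = \chi_0 \exp(2\pi i \xi)$ near $\chi_0$, the factor $t^{-2\mu_0(\chi)} = e^{-2\langle Q\xi,\xi\rangle \log t + O(|\xi|^3 \log t)}$ becomes, after rescaling $\xi = \eta/\sqrt{\log t}$, a Gaussian kernel in $\eta \in \R^d$. Integrating this Gaussian against the Taylor expansion in $\xi$ of the smooth $\chi$-dependent coefficients produces the leading term $((g-1)/2)^{d/2} \sigma(\Gammac) \vol(v) \vol(\overline{w}) (\log t)^{-d/2}$, while successive orders of the Taylor expansion of both the coefficients and of $\mu_0(\chi) - \langle Q\xi,\xi\rangle$ yield the subleading coefficients $c_j (\log t)^{-j-d/2}$ to any desired order $N$.

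The main obstacle is uniformity in $\chi$. Turning the fiberwise expansion into an asymptotic for the integral requires effective matrix-coefficient bounds with an error term of the form $O(t^{-2\mu_0(\chi) - \eta})$ that is \emph{uniform} for $\chi$ in a neighborhood of $\chi_0$, a uniform quantitative spectral gap separating $\mu_0(\chi)$ from the rest of the Casimir spectrum on $\mathcal{H}_\chi$, and a verification that fibers $\chi$ bounded away from $\chi_0$ contribute only a $o((\log t)^{-N-d/2})$ remainder for every $N$. This package of uniform spectral estimates for the family $\{\mathcal{H}_\chi\}$ amounts to a quantitative, smooth-in-$\chi$ version of the perturbation theory underlying Theorem \ref{thm:main2}, and it is the technical heart of the argument; once it is established, the Laplace method delivers the full expansion mechanically.
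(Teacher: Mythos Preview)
Your outline matches the paper's proof: Fourier decomposition of $L^2(\Mc)$ along $\Galhat$ (Lemma~\ref{lem:L2_direct_integral}), perturbative analysis showing the bottom Casimir eigenvalue $\lambda_0(\omega)$ vanishes quadratically at $\omega=0$ with Hessian $\frac{2\pi}{g-1}\langle\cdot,\cdot\rangle$ on harmonic forms (Proposition~\ref{thm:deriv_eigenvalues}), and a Laplace/stationary-phase expansion in $T=\log t$ (Lemma~\ref{thm:stationary_phase}). The only substantive difference is that, rather than invoking a Flaminio--Forni expansion, the paper proves directly via Ratner's ODE method a one-term asymptotic $\langle \uot e_n,e_m\rangle = A_{n,m}(\omega)\,t^{-1+\nu_0(\omega)} + O(\nu_0^{-2}t^{-1})$ with $A_{n,m}$ smooth in $\omega$ (Theorem~\ref{thm:strong_ratner}); combined with the uniform spectral gap of Lemma~\ref{thm:other_eigenvalues} and Ratner's bound away from the trivial character, this supplies exactly the uniformity package you correctly identify as the technical heart.
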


The constant $ \sigma(\Gammac) $ in Theorem~\ref{thm:main1} is the determinant of an explicit period matrix of
harmonic one-forms, see \S~\ref{sec:spec_around_zero}. In the case where $ \Gammac = [\Gamma,
\Gamma] $, we have $ \mathscr{G} = H_1(M,\Z) = \Z^{2g} $, and $ \sigma (\Gammac)
= 1 $.

\begin{remark}
  Theorem~\ref{thm:main1} will be proved for a larger class of functions, which
  we call \emph{rapidly decaying functions}, which, roughly speaking,
  consists of smooth functions which decay, together with all their
  derivatives, faster than any polynomial at infinity.
\end{remark}

%%%%%

\subsection{Equidistribution of spectra of large covers}

Let $ \psi \colon \Gamma \to \Z^d $, with $ d \geq 1 $, be a surjective
homomorphism and denote $ \Gammac = \ker (\psi) $.

\sloppy
Let us fix $ d $ strictly increasing sequences $ \{N_1^{(k)}, \dots, N_d^
{(k)}\}_{k \ge 1} $ of natural numbers such that $ \min\{N_1^{(k)},
\dots, N_d^{(k)} \} \to \infty $ when $ k \to \infty $.  We define
\[
  \Gamma_k = \psi^{-1} \left( N_1^{(k)} \Z \times \cdots \times N_d^{(k)}
  \Z \right),
\] and let $ M_k = \Gamma_k \backslash \PSL_2(\R) $.  Clearly, $ M_k $
is a finite Abelian cover of $ M $.

Denote by $ \square_k $ the Casimir operator on $ L^2(M_k) $.  We show
that the spectral measures of~$ \square_k $ converge weakly, in a
neighborhood of zero, to an absolutely continuous measure.  A general
result of this type for the \emph{resonances} of the Laplacian was first
proved by Jakobson, Naud and Soares
\cite{JNS}; here, we give a different proof.

\begin{bigthm}
  \label{thm:main2} There exists $ \varepsilon >0 $ and a function $
  \zeta \in L^\infty([0,\varepsilon]) $ such that, for every continuous
  function $ f $ compactly supported on $ [0,\varepsilon) $, we have
  \[
    \lim_{k \to \infty} \frac{1}{| \Gamma / \Gamma_k |} \sum_{\lambda
    \in \Spec(\square_k) \cap [0, \varepsilon]} f(\lambda) = \int_0^\varepsilon
    f(x) x^{\frac{d}{2}-1} \zeta(x) \diff x.
  \]
\end{bigthm}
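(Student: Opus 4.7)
The strategy is to Fourier-decompose $L^2(M_k)$ along the finite Galois group $\Gamma/\Gamma_k \simeq \prod_{i=1}^d \Z/N_i^{(k)}\Z$, to reduce the spectral sum in the statement to a Riemann sum over the character variety of this group, to pass to an integral over the dual torus $\T^d$ by equidistribution, and to extract the density $x^{d/2-1}\zeta(x)$ via the coarea formula.

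For each character $\chi$ of $\Gamma/\Gamma_k$ let $L^2(M,\chi)$ be the Hilbert space of $\chi$-equivariant measurable functions on $G$ with finite $L^2$-norm over a $\Gamma$-fundamental domain; the standard $G$-equivariant orthogonal decomposition
\[
L^2(M_k) \;=\; \bigoplus_{\chi \in \widehat{\Gamma/\Gamma_k}} L^2(M,\chi),
\]
diagonalises $\square_k$ as $\bigoplus_\chi \square|_{L^2(M,\chi)}$. Every $\chi \in \widehat{\Gamma/\Gamma_k}$ factors through $\psi : \Gamma \to \Z^d$ and is trivial on $\prod_i N_i^{(k)}\Z$, hence has the form $\chi_\omega(\gamma) = e^{2\pi i \langle \omega, \psi(\gamma)\rangle}$ for a unique
\[
\omega \in \Omega_k := \prod_{i=1}^d \left\{0,\, 1/N_i^{(k)},\, \ldots,\, (N_i^{(k)}-1)/N_i^{(k)}\right\} \subset \T^d.
\]
The set $\Omega_k$ has exactly $|\Gamma/\Gamma_k|$ elements and, under the hypothesis $\min_i N_i^{(k)} \to \infty$, equidistributes to Haar measure on $\T^d$.

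Next I would invoke the representation-theoretic spectral analysis of $\square$ on $L^2(M,\chi_\omega)$ developed in the earlier sections of the paper: there exist $\varepsilon > 0$ and an open neighborhood $U$ of $0 \in \T^d$ such that $\Spec(\square|_{L^2(M,\chi_\omega)}) \cap [0,\varepsilon]$ is empty for $\omega \notin U$ and consists, for $\omega \in U$, of a single simple eigenvalue $\lambda_0(\omega)$ depending real-analytically on $\omega$. Analytic perturbation theory at the trivial representation gives $\lambda_0(0) = 0$ (carried by the constants in $L^2(M)$) and $\lambda_0(\omega) = Q(\omega) + O(\|\omega\|^4)$, where $Q$ is a positive-definite quadratic form built from the period matrix of harmonic one-forms whose determinant is $\sigma(\Gammac)$. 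Setting $F := (f\circ\lambda_0)\one_U$, which is continuous on $\T^d$ because $f$ has compact support in $[0,\varepsilon)$, the spectral sum becomes the Riemann sum
\[
\frac{1}{|\Gamma/\Gamma_k|} \sum_{\lambda \in \Spec(\square_k) \cap [0,\varepsilon]} f(\lambda) \;=\; \frac{1}{|\Omega_k|} \sum_{\omega \in \Omega_k} F(\omega),
\]
which by equidistribution of $\Omega_k$ tends as $k \to \infty$ to $\int_{\T^d} F(\omega)\diff\omega$. The coarea formula applied to $\lambda_0: U \to [0,\varepsilon]$ rewrites this integral as $\int_0^\varepsilon f(x)\zeta(x)\diff x$ with $\zeta(x) = \int_{\{\lambda_0 = x\}} |\nabla \lambda_0|^{-1} \diff \mathcal{H}^{d-1}$; since $\lambda_0$ has a non-degenerate minimum at $0$ modelled on $Q$, the level sets $\{\lambda_0 = x\}$ near $0$ are approximately $Q$-ellipsoids, and a direct computation yields $\zeta(x) = x^{d/2-1}\tilde\zeta(x)$ with $\tilde\zeta \in L^\infty([0,\varepsilon])$, giving the required form.

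The hard step is the uniform spectral input above: establishing throughout $U$ (not merely infinitesimally at $0$) the simplicity and analyticity of the branch $\lambda_0(\omega)$, and securing a uniform positive lower bound for the rest of the spectrum of $\square$ on $L^2(M,\chi_\omega)$ valid for all $\omega \in \T^d$. This amounts to a global, $\omega$-dependent control of the irreducible $G$-subrepresentations appearing in $L^2(M,\chi_\omega)$, which I expect to be the core of the representation-theoretic machinery developed earlier in the paper.
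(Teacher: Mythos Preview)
Your proposal is correct and follows essentially the same route as the paper: decompose $L^2(M_k)$ over the characters of $\Gamma/\Gamma_k$, use the spectral input from Section~\ref{sec:spectrum-an-abelian} (Proposition~\ref{thm:deriv_eigenvalues} and Lemma~\ref{thm:other_eigenvalues}) to reduce the low-lying spectrum to the single branch $\lambda_0(\omega)$, pass from the Riemann sum over $\Omega_k$ to an integral by equidistribution, and then extract the density. The only cosmetic difference is in the last step: the paper applies the Morse Lemma to straighten $\lambda_0$ to $\sum y_i^2$ and then uses polar coordinates, whereas you invoke the coarea formula directly; these are equivalent, and in either case the non-degeneracy of the Hessian at $0$ is what produces the factor $x^{d/2-1}$.
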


%%%%%

\subsection{Outline of the paper}

Mixing rates of the one-parameter subgroup $ \{\exp(t U)\} $ acting on
an irreducible unitary representation of $ \PSL_2(\R) $ have been
studied by several authors, often under the name ``decay of
correlations''.  The gist of the matter is that \emph{mixing rates are
uniform for all representations in the unitary dual $ \widehat{\PSL_2(\R)}
$ lying outside a (Fell) neighbourhood of the trivial representation}.
In Section~\ref{sec:strong_ratner} we make this statement precise and we prove it
using elementary methods inspired by
\cite{Rat1}.

Since the action of Galois group $ \Gal $ on $ M_0 $ by deck
transformations commutes with the regular representation of $ G $ on $ L^2
( M_0 ) $, these two actions are simultaneously ``diagonalisable''.
This means that we can decompose the Hilbert space $ L^2( M_0 ) $ as a
Hilbert direct integral of irreducible unitary representations $ \chi
\otimes \pi $ of $ \Gal\times G $ (here $ \chi $ is a character of the
Galois group~$ \Gal $ and $ \pi $ an irreducible unitary representations
of $ G $). Thus $ L^2( M_0 )= \int_{\widehat{\Gal}\times \widehat{G}}
\chi \otimes \pi \, \D m (\chi, \pi) $, where $ m $ is a spectral
measure on the product of the group of characters $ \widehat{\Gal} $ of $
\Gal $ and of the unitary dual $ \widehat{G} $ of $ G $.  In Section~\ref{sec:spectrum-an-abelian}, we make explicit this decomposition.  We
show that for any $ \chi \in \widehat{\Gal} $ the irreducible unitary
representations $ \pi $ belonging to the continuous part of $ \widehat{G}
$ occur discretely in the support of the spectral measure $ m $.  They
may be ordered as $ \pi_0(\chi) $, $ \pi_1(\chi) $, \dots, in increasing
values of the Casimir parameter, so that each $ \pi_j(\chi) $ depend
smoothly on $ \chi \in \widehat{\Gal} $.  Furthermore, among these
representations, only $ \pi_0(\chi) $ may be close to the trivial
representation of $ G $ and this may occur only if $ \chi $ is close to
the trivial character of $ \Gal $.

Thus correlations of the horocycle flows are finally estimated
separating the contributions coming from the representations $ \pi_j(\chi)
$ far away from the trivial representation ---for these the uniform
bounds mentioned above apply--- from the contributions coming from the
representations $ \pi_0(\chi) $ for $ \chi $ in a neighbourhood~$
\mathcal U $ of trivial character of~$ \Gal $. This last step is
accomplished in Section~\ref{sec:proof_main_thm}.

Finally, in Section~\ref{sec:proof_thm_B}, we prove Theorem~\ref{thm:main2}.

%%%%%%%%%%

\section{Asymptotics of correlations} % of holomorphic families of vectors}
\label{sec:strong_ratner}

\subsection{Ratner's Theorem}

We turn our attention to the problem of estimating the decay of matrix
coefficients.  For unitary representations of $ G=\PSL_2(\R) $, 
upper bounds of the matrix coefficients for analytic vectors have been
established by a number of authors \cite{BoWa, CEG, Mil, Tro}.
Ratner \cite{Rat1} extended these bounds assuming only that the 
vectors are H\"{o}lder in the direction of the rotation subgroup.  
We now recall her result in the case of finite Sobolev regularity.

Let $ \rho \colon G \to \mathcal{U}(H) $ be an irreducible unitary
representation of $ G $ on a complex separable Hilbert space $ H $.
Such representations are classified, up to unitary equivalence, by a
continuous complex parameter $ \nu \in \imath \R_{\geq 0} \cup (0,1]
\subset \C $ and a discrete parameter $ \{\nu = n-1:  n \in 2\Z_{>0} \}
\times \{\sigma = \pm 1\} $.  The first set accounts for the principal ($
\nu \in \imath \R_{\geq 0} $), the complementary ($ \nu \in (0,1) $)
series, and the trivial representation ($ \nu = 1 $); the second set
accounts for the holomorphic ($ \sigma = 1 $) and anti-holomorphic ($
\sigma = -1 $) discrete series.

We choose the normalization of the Casimir operator $ \square $ to be
\[
  \square = -X^2-Y^2+\Theta^2 = -U^2+2U\Theta + X
  - X^2,
\] so that it coincides with the Laplace-Beltrami operator when acting
on function pulled back from the hyperbolic plane.  The Casimir operator
is a generator of the centre of the enveloping algebra of $ \sl_2(\R) $
and it acts on the Hilbert space $ H=H_{\lambda} $ as the constant
\[
  \lambda = \lambda(\nu) := \frac{1-\nu^2}{4}.
\] We set $ \varepsilon(\lambda) =1 $ if $ \lambda = 1/4 $ and $
\varepsilon(\lambda) =0 $ otherwise.

Finally, let $ \Delta = \square - 2 \Theta^2 = -(X^2+Y^2
+\Theta^2) $ and, for any $ s >0 $, define the Sobolev space $ W^s(H) $
of order $ s $ to be the completion of the subspace $ \mathscr{C}^\infty
(H) $ of smooth vectors $ v \in H $ with respect to $ \|v\|_{W^s} := \|(\Id
+ \Delta)^{s/2}v\| $. The space $ W^s(H) $ is a Hilbert space with the
inner product $ \langle v,w \rangle_{W^s} = \langle (\Id + \Delta)^{s/2}v,
w \rangle $ inducing the norm $ \| \cdot \|_{W^s} $ as above.

\begin{theorem}[\cite{Rat1, BoWa, CEG, Mil, Tro}]
  \label{thm:Ratner_mix} There exists an explicit constant $ C \geq 1 $
  such that the following holds. Let $ \rho \colon G \to \mathcal{U}(H) $
  be a non-trivial irreducible unitary representation of $ G $ of
  Casimir parameter $ \lambda = \lambda(\nu) $, and let $ v,w \in W^3(H)
  $.  For all $ t \geq 1 $ we have
  \[
    | \langle \rho(\exp(tU)).v, w \rangle | \leq C \|v\|_{W^3} \, \|w\|_
    {W^3} \, t^{-1+\nu} (\log t)^{\varepsilon(\nu)}.
  \]
\end{theorem}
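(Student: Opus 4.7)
The strategy is to follow Ratner~\cite{Rat1}: reduce the decay of matrix coefficients along the unipotent subgroup $\{\exp(tU)\}$ to the decay along the diagonal subgroup $\{\exp(rX)\}$ via the Cartan $KAK$ decomposition of $G$, and then analyse the latter through an explicit ODE satisfied by $K$-finite matrix coefficients.

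\textbf{Reduction to the action of $A$.} For $t \geq 1$, write $\exp(tU) = k_1(t)\,\exp(r(t)X)\,k_2(t)$ with $k_i(t) \in K = \PSO(2)$ and $r(t) \geq 0$. A direct singular-value computation gives $2\cosh(2r(t)) = t^2+2$, hence $r(t) = \log t + O(t^{-2})$ as $t \to \infty$. Since $\Delta = -\tfrac{1}{4}(X^2+Y^2+\Theta^2)$ commutes with the $K$-action, $\rho(k)$ preserves the norm $\|\cdot\|_{W^3}$ for every $k \in K$. The problem is therefore reduced to proving, uniformly in the representation parameter $\nu$,
\[
|\langle \rho(\exp(rX))v, w\rangle| \leq C\|v\|_{W^3}\|w\|_{W^3}\, e^{-(1-\nu)r}(1+r)^{\varepsilon(\lambda)}, \qquad r\geq 0.
\]

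\textbf{$K$-type analysis and ODE.} Decompose $v = \sum_n v_n$ and $w = \sum_m w_m$ into $\Theta$-eigenvectors, $\Theta v_n = in v_n$. Exploiting $\square v = \lambda v$ in $KAK$-coordinates together with the commutation relations $[X,\Theta] = 2Y$ and $[X,Y] = 2\Theta$, one shows that the pure matrix coefficient $\phi_{n,m}(r) := \langle \rho(\exp(rX))v_n, w_m\rangle$ satisfies a second-order linear ODE of hypergeometric type whose two linearly independent solutions behave like $e^{-(1-\nu)r}$ and $e^{-(1+\nu)r}$ for $r \to \infty$. Matching initial data at $r = 0$ yields
\[
|\phi_{n,m}(r)| \leq P(|n|,|m|)\,\|v_n\|\,\|w_m\|\,e^{-(1-\nu)r}(1+r)^{\varepsilon(\lambda)},
\]
for an explicit polynomial $P$, the logarithmic factor appearing precisely when $\nu = 0$ (i.e.\ $\lambda = 1/4$) from the coalescence of the two exponents. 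A Cauchy--Schwarz summation in $(n,m)$ then converts the polynomial $P$ into weights that are exactly defeated by three $\Theta$-derivatives, i.e.\ by the $W^3$ Sobolev norms of $v, w$. The discrete series is treated by the same ODE method and produces strictly faster decay, absorbed by the bound above.

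\textbf{Main obstacle.} The decisive technical issue is uniformity of the constant $C$ in the representation parameter $\nu$, especially near $\nu = 0$, where the principal and complementary series meet and the two asymptotic exponents merge into a single one with a log correction. Controlling the connection coefficients of the hypergeometric ODE in this limit --- while tracking their polynomial growth in the $K$-type indices --- is where the bulk of the work lies; the Sobolev exponent $3$ is dictated precisely by the degree of the polynomial $P$ appearing in the matching step.
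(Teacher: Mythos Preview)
The paper does not give a proof of this theorem: it is stated as Ratner's result and attributed to~\cite{Rat1}, then used as a black box. There is therefore nothing in the paper to compare your argument against directly.

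That said, it is worth contrasting your sketch with the method the paper \emph{does} develop for the closely related Theorem~\ref{thm:strong_ratner}. You reduce the unipotent matrix coefficient to a diagonal one via the $KAK$ decomposition $\exp(tU)=k_1(t)\exp(r(t)X)k_2(t)$ with $r(t)\sim\log t$, and then analyse $\langle\rho(\exp(rX))v_n,w_m\rangle$ through a hypergeometric-type ODE in~$r$; this is indeed Ratner's original route. The paper instead works directly with the unipotent variable: in Proposition~\ref{prop:ynm} it shows that $y(t)=\langle\rho(\exp(tU))e_n,e_m\rangle$ itself satisfies the inhomogeneous Euler equation $t^2y''+3ty'+4\lambda y=f(t)$ with $f(t)=O(t^{-1})$, obtained purely from the commutation relations in $\sl_2(\R)$ and the Casimir eigenvalue, without ever passing through $KAK$. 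The gain is that the leading asymptotic $A_{n,m}(\omega)t^{-1+\nu}$ is read off from an explicit integral formula for the solution, making its smooth dependence on the representation parameter transparent --- which is exactly what is needed downstream. Your $KAK$ route would in principle give the same bound, but extracting a \emph{smooth} leading coefficient (as opposed to just an upper bound) would require tracking the hypergeometric connection coefficients as functions of~$\nu$, which is heavier.

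Your sketch is a faithful outline of Ratner's argument; the honest acknowledgment that the uniformity in~$\nu$ near $\nu=0$ is where the work lies is accurate, and the Sobolev exponent~$3$ arising from the polynomial growth in the $K$-type indices is the correct mechanism.
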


%In our case, we need to deal with a smooth family of unitary
%representations and, for small positive Casimir parameters, we need more
%precise information on the asymptotics of the matrix coefficients. Let
%us describe our assumptions.

For our purposes, for small positive Casimir parameters, we need more
precise information on the asymptotics of the matrix coefficients. 

  In the following, for every $ v \in H $, we will denote
\[
  \ubt.v :=\rho(\exp(tU)).v \,.
\]
We now focus on vectors in $ H $ that are
eigenfunctions of both the Casimir operator $ \square $ and the
element $ \Theta $. More precisely, let $\lambda \in [0,1/4) $ and 
$ n,m \in \Z $ be fixed, and let $ e_n $ and $ e_m $
be two vectors in $ W^3(H) $ such that $
\Theta e_j = \imath j e_j $ and $ \square
= \lambda e_j $ for $ j \in \{n,m\} $.
Following Ratner's method, we will prove the following result 
(the corresponding result for the geodesic flow can be found in 
\cite[Lemma 2.30]{BKS}).

\begin{theorem}
  \label{thm:strong_ratner} Under the assumptions above, there exists $ A = A_{n,m}$ 
 explicitly defined in \eqref{eq:def_anm} such that
  \[
    \begin{split}
      &\abs{ \langle \ubt.e_n, e_m \rangle -A_{n,m}
        t^{-1+\nu}} %\\ &\hskip 3.5cm
      \leq \frac{C}{\nu^2}\|e_n\|_{W^3} \cdot
    \|e_m\|_{W^3} \, t^{-1}
    \end{split}   
  \] for some absolute constant $ C $ and for all $ t \geq 1 $.
\end{theorem}

The remainder of this section is devoted to the proof of Theorem~\ref{thm:strong_ratner}.

\subsection{Reduction to an ODE}

Let us define
\[
Y_{n,m}(t) := \langle \ubt.e_n, e_m \rangle.
\] 
It is well-known that the $e_n$ are analytic vectors and thus 
$Y_{n,m}(t)$ is a real analytic function of $t$. 
The derivatives with respect to $ t $ are given by
\begin{equation}
  \label{eq:Ynm_deriv}
  \begin{split}
    Y_{n,m}'(t) &= \langle \ubt.Ue_n, e_m \rangle,
    \\
    %\quad \text{ and } \quad
    Y_{n,m}''(t)& = \langle \ubt.U^2e_n, e_m
    \rangle.
  \end{split}
\end{equation}
The key observation is that the
function $ Y_{n,m}(t) $ satisfies a second order linear ODE with
constant coefficients, as the next proposition shows.

\begin{proposition}
  \label{prop:ynm} 
The function $ y(t)=Y_{n,m}
  (t) $ satisfies the differential equation
  \begin{equation}
    \label{eq:diffeq_ynm} t^2y''+3ty'+4\lambda y = f(t)
  \end{equation}
  for a function $ f(t)=f_{n,m}(t) $, explicilty defined in \eqref{eq:def_of_f}, 
  which is smooth in $ t $ and satisfies
  \[
    \abs{f(t)} \leq \frac{C}{t} \|e_n\|_{W^3} \, \|e_m\|_
    {W^3} \quad \text{for all $t \geq 1$ }
  \] for an absolute constant $ C $, and
  \[
    f(0)=4\lambda Y_{n,m}(0), \quad f'(0)=(4-\nu^2)
    Y'_{n,m}(0).
  \]
\end{proposition}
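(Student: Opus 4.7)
I would define $f(t, \omega) := t^2 y''(t,\omega) + 3t y'(t,\omega) + 4\lambda(\omega)\, y(t,\omega)$, so the ODE holds by construction. Smoothness of $f$ in $(t,\omega)$ is immediate from that of $y$, which is inherited from the real-analytic dependence of $e_n(\omega), e_m(\omega), \lambda(\omega)$ and the smoothness in $t$ of matrix coefficients of smooth vectors. The boundary values follow by direct substitution into the ODE and its first derivative: $f(0) = 4\lambda\, y(0)$ since the $y''$ and $y'$ terms vanish at $t=0$, and $f'(0) = (3 + 4\lambda)\,y'(0) = (4-\nu^2)\,y'(0)$ using $4\lambda = 1-\nu^2$.

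The substantive content is the decay estimate $|f(t,\omega)| \leq (C/t)\,\|e_n\|_{W^3}\|e_m\|_{W^3}$ for $t \geq 1$. I would compute $y'(t) = \langle \uot U_\omega e_n, e_m\rangle$ and $y''(t) = \langle \uot U_\omega^2 e_n, e_m\rangle$, and use the Casimir equation $\square_\omega e_n = \lambda e_n$ (together with $\Theta_\omega e_n = \imath n\, e_n$) to rewrite $U_\omega^2 e_n = (\imath n\,U_\omega + \tfrac12 X_\omega - \tfrac14 X_\omega^2 - \lambda) e_n$, with the analogous identity for $e_m$. The key commutation relation $\uot X_\omega = (X_\omega - 2t U_\omega)\,\uot$, which follows from $[X, U] = 2U$, lets me push $X_\omega$ and $X_\omega^2$ past $\uot$ (iterating gives $\uot X_\omega^2 = (X_\omega - 2tU_\omega)^2 \uot$). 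Subtracting the two resulting Casimir identities cancels the $X_\omega^2$-on-$e_m$ matrix coefficient and produces the intermediate relation $(t^2 y'(t) + t\tilde y(t))' = \imath(n+m)\,y'(t)$, where $\tilde y(t) := \langle \uot e_n, X_\omega e_m\rangle$, which integrates to the exact identity
\[
t^2 y'(t) + t \tilde y(t) = \imath(n+m)\bigl(y(t) - y(0)\bigr).
\]

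The main obstacle is the $1/t$ bound. A direct application of Ratner's Theorem~\ref{thm:Ratner_mix} to any matrix coefficient in the explicit expression
\[
f = -\tilde y(t) - t \tilde y'(t) + (t + \imath(n+m))\, y'(t) + 4\lambda\, y(t)
\]
(obtained by combining the two Casimir identities and eliminating the $X_\omega^2$-on-$e_n$ term) yields only $t^{-1+\nu}$ decay, which is too weak when $\nu$ is close to $1$. The required improvement to $1/t$ is algebraic: the indicial roots $-1 \pm \nu$ of the principal part $t^2\partial_t^2 + 3t\partial_t + 4\lambda$ coincide exactly with the leading asymptotic exponents of $y$, so this operator annihilates $y$'s leading homogeneous modes by design. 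The integrated identity above forces the leading coefficients of $\tilde y$ to be $(1\mp\nu)$ times those of $y$, which is precisely what is needed for the $t^{-1\pm\nu}$ contributions in $f$ to cancel identically. After this cancellation, $f$ is expressible as a combination of matrix coefficients of $\uot$ with vectors of total weight at most $3$ in $\{X_\omega, U_\omega\}$ applied to $e_n$ and $e_m$, and Ratner's Theorem supplies the $(C/t)\,\|e_n\|_{W^3}\|e_m\|_{W^3}$ bound.
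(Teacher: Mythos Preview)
Your argument has a genuine gap in the decay estimate. The final appeal to Ratner's Theorem cannot produce a $1/t$ bound: Theorem~\ref{thm:Ratner_mix} gives only $t^{-1+\nu}$ for any matrix coefficient in this irreducible representation, and when $\nu$ is close to~$1$ (equivalently $\lambda$ close to~$0$, which is precisely the regime this proposition is needed for) that is essentially no decay at all. Extra smoothness of the vectors does not improve Ratner's exponent. Your ``algebraic cancellation'' of the $t^{-1\pm\nu}$ modes is also circular: it presupposes that $y$ admits an asymptotic expansion with those leading terms, but that expansion is exactly what Lemma~\ref{cor:estimate_ynm} derives \emph{from} the present proposition. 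Concretely, your expression
\[
f = -\tilde y - t\,\tilde y' + (t + \imath(n+m))\,y' + 4\lambda\, y
\]
still contains the terms $t\,y'$ and $t\,\tilde y'$; even after using your integrated identity to write $t\,y' = -\tilde y + O(1/t)$, you are left with $f = -2\tilde y - t\,\tilde y' + O(1/t)$, and there is no further identity in your outline that forces $-2\tilde y - t\,\tilde y'$ to be $O(1/t)$.

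The paper's proof is self-contained and does \emph{not} invoke Ratner. It pushes the algebra one step further: combining the $\Theta$-eigenvector identity for both $e_n$ and $X_\omega e_n$ with the Casimir relation yields a closed equation
\[
\bigl((t^3+4t)\,y'\bigr)'(t) + 4\lambda\, t\, y(t) = g(t),
\]
where $g$ is an explicit combination of $y$ and $y'$ with coefficients growing at most like~$t$. Using only the trivial Cauchy--Schwarz bounds $|y|,\,|y'|\le C$, one gets $|g(t)|\le Ct$, and then the integrated form
\[
y'(t) = \frac{1}{t^3+4t}\int_0^t \bigl(-4\lambda\, s\, y(s) + g(s)\bigr)\,\D s
\]
immediately self-improves to $|y'(t)|\le C/t$. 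Feeding this back into the ODE gives $|y''(t)|\le C/t^2$. Finally, $f$ is rewritten purely in terms of $y,\,y',\,y''$ with \emph{bounded} (not growing) coefficients:
\[
f(t) = -4y'' - \tfrac{4}{t}y' - 2\imath(m+n)\,y' - \tfrac{\imath(m+n)}{t}y + \tfrac{(m-n)^2}{t^2}y,
\]
and each term is $O(1/t)$. The missing idea in your outline is this bootstrap via the integral form, which replaces the appeal to any external decay estimate.
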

\begin{proof}
  The last formulas follow immediately from equation~\eqref{eq:diffeq_ynm}.
  A standard computation gives us
  \[
    \begin{split}
      \Theta (\ubt.e_m) &= \ubt.[(\Ad_{\exp(tU)}(\Theta))
      \, e_m] \\
      &= \ubt.[(\Theta + tX + \frac{1}{2}t^2U)e_m].
    \end{split}
  \] Denoting $ y(t)=Y_{n,m}(t) $, we compute
  \begin{equation*}
    \begin{split}
      \imath m\, y(t) =& -\langle \ubt.e_n, \Theta e_m
      \rangle = \langle \Theta (\ubt.e_n), e_m
      \rangle \\
      =& \langle \ubt.(\Theta e_n), e_m \rangle +
      t \langle \ubt.(X e_n), e_m \rangle  + \frac{1}{2} t^2 
      \langle \ubt.(U e_n), e_m \rangle\\
      =& \imath n\,y(t) + t \langle \ubt.(X e_n, e_m
      \rangle + \frac{1}{2} t^2y'(t).
    \end{split}
  \end{equation*}
  Define $ q(t) = \langle \ubt.(X e_n), e_m
  \rangle $. We rewrite the equation above as
  \begin{equation}
    \label{eq:lemma_ynm_1} tq(t) = \imath (m-n)y(t) - \frac{1}{2} t^2y'(t),
  \end{equation}
  and, after differentiating, we get
  \begin{equation}
    \label{eq:lemma_ynm_2} tq'(t) = -q(t) + \imath (m-n)y'(t) - \frac{1}{2} 
    t^2y''(t) - ty'(t).
  \end{equation}
  Repeating for $ q(t) $ the computation we made for $ y(t) $, we obtain
  \[
    \begin{split}
      \imath m\,q(t) &= \langle \Theta [\ubt.(X e_n)],
      e_m \rangle \\
      &= \langle \ubt.[(\Theta X + tX^2 + \frac{1}{2}t^2U
      X)e_n, e_m \rangle.
    \end{split}
  \] Using the identity $ \Theta X = X\Theta
  -Y = X \Theta -U+\Theta $, the
  formula above yields
  \begin{equation}
    \label{eq:lemma_ynm_3} \imath mq(t) = \imath n \,q(t) - y'(t) +\imath
    n y(t) + \frac{1}{2}t^2q'(t) +t \langle \ubt.(X^2 e_n), e_m
    \rangle.
  \end{equation}
  From the assumption $ \square e_n = \lambda e_n $ we get
  \begin{equation}
    \label{eq:lemma_ynm_4}
    \begin{split}
      \lambda y(t) &= \langle \ubt.(\square e_n),
      e_m \rangle \\
      &= \langle \ubt.[(-U^2 +2U\Theta +X -
      X^2)e_n], e_m \rangle \\
      &= -y''(t) +2\imath n y'(t) + q(t) - \langle \ubt.(X^2 e_n
      ), e_m \rangle.
    \end{split}
  \end{equation}
Combining~\eqref{eq:lemma_ynm_4} with~\eqref{eq:lemma_ynm_3},
we obtain
  \begin{equation*}
    \begin{split}
&-ty''(t) +2\imath n ty'(t) + t q(t) -\lambda t y(t) \\
&\qquad =\imath (m-n)q(t) + y'(t) -\imath n y(t) - \frac{1}{2}t^2q'(t), 
    \end{split}
  \end{equation*}
and, substituting the expressions for $q(t)$ and $q'(t)$ 
from~\eqref{eq:lemma_ynm_1} and ~\eqref{eq:lemma_ynm_2},
we deduce 
  \begin{equation*}
    \begin{split}
&(t^3+4t)y''(t) + (3t^2+4) y'(t) + 4\lambda t y(t) \\
&\qquad = 4 \imath (m+n) t y'(t) + 2 \imath (m+n) y(t)  + 4 \frac{(m-n)^2}{t}y(t). 
    \end{split}
  \end{equation*}
We can rewrite the above equation as 
  \begin{equation}
    \label{eq:lemma_ynm_sol} \big((t^3+4t) y'\big)'(t) + 4\lambda
    t y(t) = g(t),
  \end{equation}
  where
  \begin{equation}
    \label{eq:lemma_ynm_f} g(t) = 4\imath (m+n) t y'(t) + 2 \imath (m+n)y(t)
    + 4 \frac{(m-n)^2}{t}y(t).
  \end{equation}

Note that, if $m \neq n$, then $y(0)=0$ so that $g$ is bounded for all $t$.
  Integrating~\eqref{eq:lemma_ynm_sol}, we have
  \begin{equation}
    \label{eq:Abelian_covers:5} y'(t)=\frac {1}{t^3+4t} \int_0^t h(s)\,
    \D s, \quad \text{ where } \quad h(t)= -4\lambda t \,y(t) +
    g(t).
  \end{equation}
  Observe that, since $h$ is bounded, the integral above is well defined and the expression on
  the right-hand side above converge as $ t\to 0 $.
  %In fact, if $m=n$, then $h(t)= -\imath (2m)y(0) + O(t) $. If $m\not=n$, then
  %$y(t) = t y'(0) + O(t^2)$ and therefore $h(t)= (m-n)^2 y'(0) + O(t) $ (which
  %implies $y'(0)=0 $ if $m-n \neq \pm 2$, as it should).

  We now estimate $ h(t) $. Let us first notice that
  \[
    (1+|n|+|m|+n^2+m^2) |y(t)| \leq \sum_{k+\ell \leq 2} \|e_n\|_
    {W^k} \, \|e_m\|_{W^\ell},
  \] where we used~\eqref{eq:Ynm_deriv} and the Cauchy-Schwarz
  Inequality. Similarly, since there exists an absolute constant $ C
  \geq 1 $ such that
  \[
    |n| \cdot \|U e_n\| = \|U \Theta e_n\|
    \leq C \|e_n\|_{W^2,}
  \] then, we can bound
  \[
    (1+|n|+|m|) |y'(t)| \leq C \sum_{k+\ell \leq 2} \|e_n\|_{W^k}
    \, \|e_m\|_{W^\ell}.
  \] Thus, from~\eqref{eq:lemma_ynm_f}, we deduce
  \[
    |h(t)| \leq 20 C \left( \sum_{k+\ell \leq 2} \|e_n\|_{W^k} \,
    \|e_m \|_{W^\ell} \right) \, t \quad \text{ for all } t\geq
    1.
  \] By~\eqref{eq:Abelian_covers:5}, for all $ t \geq 1 $ we have
  \[
    |y'(t)| \leq 10 C \left( \sum_{k+\ell \leq 2} \|e_n\|_{W^k}
    \, \|e_m \|_{W^\ell} \right) \, t^{-1}.
  \] Rewriting the equation~\eqref{eq:lemma_ynm_sol} as
  \[
    t^2 y'' = -4 y'' -3 t y' - 4y'/t -4 \lambda y + g(t)/t,
  \] we deduce that, for all $ t\geq 1 $,
  \[
    \begin{split}
      |y''(t)| &\leq \frac{4}{t^2}|y''(t)| + \frac{7}{t}|y'(t)| + \frac{|h
      (t)|}{t^3} \\
      &\leq 100 C \left( \sum_{k+\ell \leq 2} \|e_n\|_{W^k} \, \|e_m
      \|_{W^\ell} \right) \, t^{-2}.
    \end{split}
  \] Finally, rewriting once more the equation~\eqref{eq:lemma_ynm_sol}
  as
  \[
    t^2y''+3ty'+4\lambda y = f(t)
  \] with
  \begin{equation}\label{eq:def_of_f}
    \begin{split}
      f(t) &= -4 y'' - \frac {4} {t} y' + \frac {g(t)}{t}\\
      &= -4 y'' -\frac{4}{t}y' - 4\imath (m+n) y'(t) - 2\imath \frac{m+n}
      {t}y(t) + 4\frac{(m-n)^2}{t^2}y(t)
    \end{split}
  \end{equation}
and combining the previous estimates, we conclude the bound
  \[
    \abs{f(t)} \leq \frac{500C}{t}\left( \sum_{k+\ell \leq 3} \|e_n\|_
    {W^k} \, \|e_m\|_{W^\ell} \right),
  \] which proves the result. 
\end{proof}

We now use Proposition~\ref{prop:ynm} to estimate $ Y_{n,m}(t) $ and prove Theorem~\ref{thm:strong_ratner}. Note that $ \nu \in (0,1] $ for all $ 0 \leq
\lambda <1/4 $.

\begin{lemma}
  \label{cor:estimate_ynm} For $ \nu \in (0,1] $ define
\[
\widetilde{f}_{n,m}(t) = 4(\nu -1) Y_{n,m}'(t) + 2 \imath (m+n)(2\nu -1) Y_{n,m}(t) + 4 \frac{(m-n)^2}{t} Y_{n,m}(t),
\]
and 
  \begin{equation}
    \begin{split}
    \label{eq:def_anm} A_{n,m}= \frac{1}{2\nu} \Big(& 5Y_{n,m}'(1) + (4 \imath (m+n) +1+\nu) Y_{n,m}(1) \\
&- \int_1^\infty r^
    {-1-\nu} \widetilde{f}_{n,m} (r) \,\D r\Big).
    \end{split}
  \end{equation}
  Then, for every $ t \geq 1 $, we have
  \[
    \abs{Y_{n,m}(t) - A_{n,m} t^{-1+\nu}} \leq \frac{C}{\nu^2}\|
    e_n\|_{W^3} \, \|e_m\|_{W^3} \, t^{-1}.
  \]
\end{lemma}
\begin{proof}
  The function $\widetilde{f}_{n,m}$ is uniformly bounded over $[1,\infty)$, 
  hence $A_{n,m}$ is well-defined.

  We consider the initial value problem given by the ODE~\eqref{eq:diffeq_ynm} 
  with the initial conditions $y(1) = Y_{n,m}(1) = \langle \ub^1.e_n,e_m \rangle$ 
  and $y'(1) = Y_{n,m}'(1) = \langle \ub^1.Ue_n,e_m \rangle$.
  Its solution is given by the formula
  \[
    \begin{split}
    Y_{n,m}(t) = &\frac{t^{-1+\nu}}{2\nu} \left( \int_1^t r^{-\nu}
    f_{n,m}(r)\, \D r  + (1+\nu) Y_{n,m}(1) + Y_{n,m}'(1)\right) \\
   & + \frac{t^{-1-\nu}}{2\nu} \left(- \int_1^t r^\nu f_{n,m}(r)\, \D r + (\nu-1) Y_{n,m}(1) - Y_{n,m}'(1)\right) .
    \end{split}
  \] 
Since $\nu \in (0,1]$, the function $r^{-\nu}f_{n,m}(r)$ is in $L^1([1,\infty))$. 
Thus, integrating by parts the terms $4y''$ and $-4\imath(m+n)y'(t)$ appearing in the 
definition \eqref{eq:def_of_f} of $f_{n,m}$, we can write
\[
    \begin{split}
 \int_1^t r^{-\nu} f_{n,m}(r)\, \D r =&\int_1^\infty r^{-\nu}  f_{n,m}(r)\, \D r 
-\int_t^\infty r^{-\nu}  f_{n,m}(r)\, \D r \\
=&4  y'(1) + 4\imath (m+n) y(1) - \int_1^\infty r^
    {-1-\nu} \widetilde{f}_{n,m} (r) \,\D r \\
&-\int_t^\infty r^{-\nu}  f_{n,m}(r)\, \D r .
    \end{split}
\]
Substituting in the expression for $Y_{n,m}(t)$, we obtain the inequality
  \begin{equation}
    \label{eq:bound_ynm}
    \begin{split}
& \abs{Y_{n,m}(t) - t^{-1+\nu} A_{n,m} } \leq t^{-1+\nu} 
 \abs {\, \int_t^{\infty}\frac{r^{-\nu}f_{n,m}(r)}{2\nu} \, \D r\,} \\
& \qquad  + t^{-1-\nu} \abs{\, -
      \int_0^t \frac{r^{\nu}f_{n,m}(r)}{2\nu}\, \D r \, + (\nu-1) Y_{n,m}(1) - Y_{n,m}'(1)}. 
    \end{split}
  \end{equation}
Using the bound on $|f_{n,m}|$ given by Proposition~\ref{prop:ynm}, 
we conclude that the two summands in the right-hand side of~\eqref{eq:bound_ynm}
  are both bounded by
  \begin{equation*}
    \frac{C}{2\nu^2}\|e_n\|_{W^3} \, \|e_m\|_{W^3} \, t^
    {-1}.
  \end{equation*}
  This completes the proof.
\end{proof}

We finish this section with a corollary that extends the asymptotics 
of Theorem \ref{thm:strong_ratner} to all $W^3$ vectors.

\begin{corollary}\label{cor:strong_Ratner} 
There exists a constant $C\geq 1$ such that the following holds.
Let $v,w \in W^3(H)$, and let 
\[
    \begin{split}
F_{v,w}(t) = &4(\nu -1) \langle \ubt. Uv,w\rangle + 2(2\nu -1) (\langle \ubt. \Theta v,w\rangle - \langle \ubt. v, \Theta w\rangle) \\
&- \frac{4}{t} (\langle \ubt. \Theta^2 v,w\rangle + \langle \ubt. v, \Theta^2 w\rangle  + 2\langle \ubt. \Theta v, \Theta w\rangle),
    \end{split}
\]
and 
  \begin{equation}
    \begin{split}
    \label{eq:def_avw} A(v,w)= &\frac{1}{2\nu} \Big(5 \langle \ub^1. U v,w\rangle + 4(\langle \ub^1. \Theta v,w\rangle - \langle \ub^1. v, \Theta w\rangle) \\
& +(1+\nu)\langle \ub^1. v,w\rangle  - \frac{1}{2\nu} \int_1^\infty r^{-1-\nu} F_{v,w} (r) \,\D r \Big).
    \end{split}
  \end{equation}
For every $t\geq 1$ we have
\[
\abs{\langle \ubt.v,w \rangle - t^{-1+\nu} A(v,w)} \leq \frac{C}{\nu^2}\|
    v\|_{W^3} \, \|w\|_{W^3} \, t^{-1}.
\]
\end{corollary}
\begin{proof}
By the representation theory of compact Abelian groups, $H$ admits an 
orthonormal basis of analytic eigenvectors $e_n$ of $\Theta$ with eigenvalue $\imath n$, 
where $n \in \Z$. 

Let us write $v=\sum_{n \in \Z} v_n e_n$ and $w=\sum_{n \in \Z} w_n e_n$.
From the definition \eqref{eq:def_anm} and the bound in Proposition 
\ref{prop:ynm}, we deduce that 
\[
|A_{n,m}|\leq C \|e_n\|_{W^3} \, \|e_m\|_{W^3}, 
\]
so that the Cauchy-Schwarz inequality yields
\[
\sum_{n,m \in \Z} \abs{v_n \, w_n \, A_{n,m}} \leq C \|v\|_{W^3} \, \|w\|_{W^3}.
\]
This proves that 
\[
A(v,w) = \sum_{n,m \in \Z} v_n \, w_n \, A_{n,m}
\]
is well-defined and the conclusion follows.
\end{proof}

%%%%%%%%%%

\section{Harmonic analysis on Abelian covers}%
\label{sec:spectrum-an-abelian}

\subsection{Characters and harmonic one-forms}

Let us recall that the Galois group $ \Gal = \Gamma / \Gamma_0 $ of the
cover $ p \colon S_0 \to S $, as well as of $ \Mc = T^1(\Sc) \to M = T^1
(S) $, is a finitely generated Abelian group, which is isomorphic to $
\Z^d $ for some $ 1 \leq d \leq 2g $, where $ g $ is the genus of $ S $.
The group $ \Gal $ acts on $ \Mc $ by left-translations, that is, for
any $ [\gamma] = \gamma \Gammac $ and any $ x = \Gammac g \in \Mc $ we
have $ [\gamma].x= \Gammac \gamma g $.  Similarly, $ \Gal $ acts on $
\Sc $ by $ [\gamma].  j(x) = j([\gamma].x) $, where $ j \colon \Mc \to
\Sc $ is the canonical projection $ j(\Gammac g) = \Gammac g \PSO(2,\R) $.

Let us denote by $ \Galhat $ the dual of $ \Gal $, namely the group of
characters $ \chi \colon \Gal \to U(1) $, which we identify with the
group of homomorphisms $ \chi \colon \Gamma \to U(1) $ such that $ \ker(\chi)
\supseteq \Gammac $. We now describe its Lie algebra in terms of
harmonic one-forms.

Recall that every cohomology class has a canonical representative which
is a harmonic one-form, namely a one-form $ \omega $ such that $ \Delta
\omega =0 $. %, where $\Delta$ denotes the Laplacian
We denote by $ \harm $ the $ d $-dimensional vector space of harmonic
one-forms on $ S $ representing cohomology classes with vanishing
periods on the cycles represented by $ \Gammac $.

Let $ x \in M $ be fixed.  For every $ \omega \in \harm $, define
\begin{equation*}
  \begin{split}
    \chi_\omega \colon \Gal &\to U(1) \subset \C \\
    [\gamma] &\mapsto \exp\left( 2 \pi \imath \int_{x}^{[\gamma].x} j^{\ast}
    p^{\ast} \omega\right),
  \end{split}
\end{equation*}
where the integral is taken along a path in $ \Mc $ from $ x $ to $ [\gamma].x
$. The value $ \chi_\omega([\gamma]) $ does not depend on the path
chosen. Indeed, since $ \omega $ vanishes on the cycles represented by
elements in $ \Gammac $, the one-form $ p^{\ast}\omega $ is exact on $
\Sc $ and vanishes on the fibers of the projection $ j \colon \Mc \to
\Sc $. Moreover, for the same reason, $ \chi_\omega $ does not depend on
the choice of the point $ x $.

\begin{lemma}
  \label{lem:Abelian_covers:1} For every $ \omega \in \harm $, we have $
  \chi_\omega \in \Galhat $. Moreover, the space $ \harm $ is isomorphic
  to the Lie algebra $ \Lie(\Galhat) $ of the group $ \Galhat $, the map
  $ \omega \mapsto \chi_\omega $ coincides with the exponential map of $
  \Galhat $ and induces an isomorphism between $ \Galhat $ and $ \harm /
  \harm(\Z) $, where $ \harm(\Z) $ consists of harmonic one-forms
  representing elements in $ H^1(S, \Z) $.
\end{lemma}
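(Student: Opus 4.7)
The plan is a four-step verification. First, I would check that $\chi_\omega$ is a homomorphism $\Gal \to U(1)$. Path-independence of the integral is already noted in the excerpt (exactness of $p^{\ast}\omega$ on $\Sc$ and vanishing on the fibers of $j$), so $\chi_\omega$ is well defined. Multiplicativity $\chi_\omega([\gamma_1\gamma_2]) = \chi_\omega([\gamma_1])\chi_\omega([\gamma_2])$ then follows by concatenating a path from $x$ to $[\gamma_1].x$ with the image under the deck transformation $[\gamma_1]$ of a path from $x$ to $[\gamma_2].x$, using the invariance of $j^{\ast}p^{\ast}\omega$ under the deck action, which holds because $\omega$ descends to $S$.

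Next I would identify $\harm$ with $\Lie(\Galhat)$ via Hodge theory. Because $\Gammac \supseteq [\Gamma,\Gamma]$, the group $\Gal = \Gamma/\Gammac$ is a quotient of $\Gamma^{\mathrm{ab}} = H_1(S,\Z)$; let $K \subset H_1(S,\Z)$ denote the kernel of this projection, so $K$ has rank $2g-d$ and $H_1(S,\Z)/K \cong \Gal$. The de Rham / Hodge isomorphism identifies the full space of harmonic one-forms on $S$ with $H^1(S,\R) \cong \operatorname{Hom}(H_1(S,\R),\R)$ through periods. By definition, $\harm$ is the preimage of the annihilator of $K \otimes \R$, so the period map restricts to an isomorphism $\harm \xrightarrow{\sim} \operatorname{Hom}(\Gal \otimes \R, \R) = \operatorname{Hom}(\Gal,\R)$. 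On the other hand, since $\Gal \cong \Z^d$ we have $\Galhat \cong \T^d$, whose Lie algebra is canonically identified with $\operatorname{Hom}(\Gal,\R)$ via $L \mapsto (\gamma \mapsto e^{2\pi\imath L(\gamma)})$. Combining these gives $\harm \cong \Lie(\Galhat)$ and, as a byproduct, $\dim \harm = d$.

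Under these identifications, the defining formula for $\chi_\omega$ is literally the exponential map of $\Galhat$ applied to the element $L_\omega \in \operatorname{Hom}(\Gal,\R)$ given by integration of $\omega$ against representing cycles. Since $\exp \colon \Lie(\Galhat) \to \Galhat$ is surjective with kernel the integer lattice, the last step is to compute the kernel of $\omega \mapsto \chi_\omega$: one has $\chi_\omega = 1$ iff every period of $\omega$ against a class in $\Gal$ lies in $\Z$, iff (using vanishing on $K$) $\omega$ has integer periods on all of $H_1(S,\Z)$, i.e.\ $\omega \in \harm(\Z)$. Passing to the quotient yields the isomorphism $\Galhat \cong \harm/\harm(\Z)$.

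I expect the only substantive step to be the Hodge-theoretic identification in the second paragraph; once $\harm$ is recognised as the annihilator of $K$ and the period pairing with $\Gal$ is shown to be perfect (the dimension count $\dim\harm = 2g - \operatorname{rank}(K) = d$ being the key input), the remaining steps amount to matching the definition of $\chi_\omega$ with the standard exponential map on $\T^d$ and reading off the kernel.
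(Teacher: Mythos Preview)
Your proposal is correct and follows essentially the same route as the paper. The paper's argument is terser---after verifying multiplicativity of $\chi_\omega$ in $[\gamma]$ and additivity $\chi_{\omega_1+\omega_2}=\chi_{\omega_1}\chi_{\omega_2}$, it simply observes that the differential of $\omega\mapsto\chi_\omega$ at the origin (the period map $\omega\mapsto([\gamma]\mapsto 2\pi\imath\int_x^{[\gamma].x}j^\ast p^\ast\omega)$) is an isomorphism by the very definition of $\harm$---so your Hodge-theoretic identification of $\harm$ with the annihilator of $K$ and your explicit kernel computation spell out what the paper leaves implicit.
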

\begin{proof}
  For every $ [\gamma_1], [\gamma_2] \in \Gal $ and $ \omega \in \harm $,
  we have
  \[
    \begin{split}
      \chi_\omega([\gamma_1] \cdot [\gamma_2]) &= \exp \left(2 \pi
      \imath \int^{[\gamma_1]\cdot[\gamma_2].  x}_x j^\ast p^\ast\omega\right)
      \\
      &= \exp\left(2 \pi \imath \int^{[\gamma_2].x}_x j^\ast p^\ast\omega\right)
      \exp \left(2 \pi \imath \int^{[\gamma_1]\cdot [\gamma_2].x}_{[\gamma_2].x}
      j^\ast p^\ast \omega\right)\\
      &=\exp \left(2 \pi \imath \int^{[\gamma_2].x}_x j^\ast p^\ast
      \omega\right) \exp \left(2\pi \imath \int^{[\gamma_1].x'}_{x'} j^\ast
      p^\ast \omega\right) \\
      &=\chi_\omega ([\gamma_1]) \chi_\omega ([\gamma_2]),
    \end{split}
  \] where $ x' = [\gamma_2].x $.  This shows that $ \chi_\omega $ is a
  character of $ \Gal $. It is easy to see that, for every $ \omega_1,
  \omega_2 \in \harm $, we have $ \chi_{\omega_1 +\omega_2 } = \chi_{\omega_1}
  \chi_{\omega_1} $, proving that the map $ \omega \mapsto \chi_\omega $
  is a homomorphism of the additive Abelian group $ \harm $ into
  multiplicative Abelian group $ \Gal $. By the defition of the space $
  \harm $, the differential of this map at the origin $ \omega\mapsto
  \left([\gamma] \mapsto 2 \pi \imath \int^{[\gamma].x}_{x} j^\ast p^\ast\omega\right)
  $ is an isomorphism, thus concluding the proof.
\end{proof}

\subsection{The line bundle associated to a character}%
\label{sec:line_bundles}

Let us fix a fundamental domain $ \fund \subset \Mc $ for the action of $
\Gal $ on $ \Mc $.  Any function $ f \colon M \to \C $ can be seen as a
section of the trivial bundle $ M_{\one}=M \times \C $, which could be
defined as
\[
  M_{\one} = \Gal \backslash (\Mc \times \C),
\] where $ \Gal $ acts on $ \Mc \times \C $ by $ [\gamma].(x,z) = ([\gamma]^
{-1}.x,z) $. We generalize the previous definition by allowing a
possibly non-trivial holonomy; in other words, given a character $ \chi
\in \Galhat $, we define the line bundle $ M_{\chi} \to M $ associated
to $ \chi $ by
\[
  M_{\chi} = \Gal \backslash (\Mc \times \C),
\] where $ \Gal $ acts on $ \Mc \times \C $ by $ [\gamma].(x,z) = ([\gamma]^
{-1}.x, \chi([\gamma]) z) $.

We denote by $ L^2(M_{\chi}) $ the Hilbert space completion of the space
of continuous sections of $ M_{\chi} $ with respect to the inner product
\[
  \langle f_1, f_2 \rangle_{L^2(M_\chi)} := \int_{M} f_1 \, \overline{f_2}
  \diff
  \vol.
\] We remark that $ f_1 \, \overline{f_2} $ is a well-defined function
on $ M $ for any two sections $ f_1, f_2 $ of~$ M_{\chi} $.
Equivalently, we could define a section of $ M_\chi $ as a function $ f $
on $ \Mc $ which satisfies the condition $ f([\gamma]^{-1}.x) = \chi([\gamma])
f(x) $, and $ L^2(M_\chi) $ as the space of such measurable functions
whose square norm $ \int_{\fund} |f|^2 \diff
\vol $ is finite. By the same token, for every positive integer $ s $,
we define the $ W^{2s} $ Sobolev norm of $ f $ as
\[
  \|f\|_{W^{2s}(M_\chi)} := \int_\fund \Delta^s f \diff
  \vol,
\] and the Sobolev norm for other real values of $ s $ can be obtained
by interpolation.

Let us give an explicit example, which will be relevant in the
following. Let us fix $ x_0 \in \fund $.  Given $ \chi = \chi_{\omega}
\in \Galhat $, where $ \omega \in \harm $, let us define
\begin{equation}\label{eq:definition_G_omega}
  G_\omega(x) = \exp \left( 2 \pi \imath \int_{x}^{x_0} j^{\ast} p^{\ast}
  \omega\right).
\end{equation}
Then,
\begin{equation*}
  \begin{split}
    G_\omega([\gamma]^{-1}.x) &= \exp \left( 2 \pi \imath \int_{[\gamma]^
    {-1}.x}^{x_0} j^{\ast} p^{\ast} \omega\right) \\
    &= \exp \left( 2 \pi \imath \int_{[\gamma]^{-1}.x}^x j^{\ast} p^{\ast}
    \omega\right) \, \exp \left( 2 \pi \imath \int_{x}^{x_0} j^{\ast} p^
    {\ast} \omega\right)\\
    &= \exp \left( 2 \pi \imath \int_{x}^{[\gamma].x} j^{\ast} p^{\ast}
    \omega\right) \, G_\omega(x)= \chi_\omega([\gamma]) \, G_\omega(x),
  \end{split}
\end{equation*}
which proves that $ G_\omega $ is a section of $ \Mchio $.  Moreover,
\[
  \int_\fund |G_\omega|^2 \diff
  \vol =
  \vol(\fund) < \infty,
\] so that $ G_\omega \in L^2(\Mchio) $. Furthermore, it follows 
from \eqref{eq:definition_G_omega} that $
G_\omega $ is real-analytic as a function of $ \omega \in \harm $.

Let us compute its derivatives.  We identify $ x \in \Mc $ with the
point $ (z,v) \in T^1(\Sc) $.  Easy computations show
\begin{equation}
  \label{eq:derivatives_G_omega}
  \begin{split}
    (\Theta G_\omega)(x) &= 2 \pi \imath \, \Theta \left(\int_{x}^{x_0}
    j^{\ast} p^{\ast} \omega \right) G_\omega(x) = 0, \\
    (X G_\omega)(x) &= 2 \pi \imath \, X\left(\int_{x}^{x_0} j^{\ast} p^
    {\ast} \omega \right) G_\omega(x) = -2 \pi \imath \, (p^{\ast}
    \omega)_z(v)\, G_\omega(x),\\
    (Y G_\omega)(x) &= 2 \pi \imath \, Y\left(\int_{x}^{x_0} j^{\ast} p^
    {\ast} \omega \right) G_\omega(x) = -2 \pi \imath \, (p^{\ast}
    \omega)_z(v^{\perp})\, G_\omega(x),
  \end{split}
\end{equation}
where $ v^{\perp} $ is the vector obtained by rotating $ v $ by $ \pi/2 $
in the clockwise direction.

Let now $ s \in L^2(M_\chi) $.  Then, the function $ G_\omega \cdot s $
satisfies
\begin{equation*}
  \begin{split}
    (G_\omega \cdot s)([\gamma]^{-1}.x) &= G_\omega([\gamma]^{-1}.x) \,
    s([\gamma]^{-1}.x) = \chi_\omega ([\gamma]) \, \chi([\gamma]) \, G_\omega
    (x)\, s(x) \\
    &=(\chi \chi_{\omega})([\gamma]) \, (G_\omega \cdot s)(x),
  \end{split}
\end{equation*}
so that it is a section of $ \Mcchio $.  Moreover, it is also clear that
\[
  \|G_\omega \cdot s \|_{L^2(\Mcchio)} = \|s\|_{L^2(M_\chi)}.
\] We obtain the following lemma.
\begin{lemma}
  \label{lem:Abelian_covers:2} The map
  \[
    I_\omega \colon s \mapsto G_\omega \cdot s
  \] is a surjective linear isometry between $ L^2(M_\chi) $ and $ L^2(\Mcchio)
  $. Moreover, for every $ r >0 $, the map $ I_\omega $ is a linear
  isomorphism between the Sobolev spaces $ W^r(M_\chi) $ and $ W^r(\Mcchio)
  $.
\end{lemma}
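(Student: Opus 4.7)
The plan is to dispatch the three claims (mapping property, $L^2$-isometry, and Sobolev isomorphism) in order, using only the pointwise identity $|G_\omega|\equiv 1$ and the derivative formulas~\eqref{eq:derivatives_G_omega}. The covariance computation for $G_\omega\cdot s$ performed just above the lemma already shows that $I_\omega$ sends sections of $M_\chi$ to sections of $M_{\chi\chi_\omega}$, so the mapping property is for free. For the $L^2$-isometry, I would observe that $\int_x^{x_0} j^{\ast}p^{\ast}\omega$ is real (as the integral of a real harmonic one-form along a real path), so $G_\omega(x)$ lies in $U(1)$ for every $x$; hence $|G_\omega \cdot s|^2 = |s|^2$ pointwise on $\fund$, and the displayed identity $\|G_\omega \cdot s\|_{L^2(\Mcchio)}=\|s\|_{L^2(M_\chi)}$ is immediate.

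For surjectivity I would exhibit the inverse $J_\omega : f\mapsto \overline{G_\omega}\cdot f$. One checks, using the equivariance $G_\omega([\gamma]^{-1}.x)=\chi_\omega([\gamma])G_\omega(x)$ and $|\chi_\omega([\gamma])|=1$, that whenever $f$ is a section of $\Mcchio$ the product $\overline{G_\omega}\cdot f$ transforms as a section of $M_\chi$; since $|G_\omega|=1$, the relations $I_\omega J_\omega=\mathrm{Id}$ and $J_\omega I_\omega=\mathrm{Id}$ hold pointwise. This proves $I_\omega$ is a unitary isomorphism between $L^2(M_\chi)$ and $L^2(\Mcchio)$.

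For the Sobolev claim, the key input is~\eqref{eq:derivatives_G_omega}, which states that for $W\in\{\Theta,X,Y\}$ one has $W G_\omega = \phi_W(\omega)\,G_\omega$, where $\phi_W(\omega)$ is a smooth function whose sup norm is controlled by $\omega$ evaluated at points of the compact surface $S$, and is thus bounded. Applying the Leibniz rule, $W(G_\omega\cdot s) = G_\omega\cdot(Ws + \phi_W(\omega)s)$. Iterating this identity, every monomial in the universal enveloping algebra of $\sl_2(\R)$ of order $k$ applied to $G_\omega\cdot s$ equals $G_\omega$ times a differential operator of order at most $k$ with smooth bounded coefficients applied to $s$. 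In particular $(\Id+\Delta)^k(G_\omega\cdot s) = G_\omega\cdot D_k s$ for a differential operator $D_k$ of order $2k$ whose coefficients depend polynomially on $\phi_W(\omega)$ and their derivatives. Combined with the $L^2$-isometry already established, this yields $\|I_\omega s\|_{W^{2k}(\Mcchio)}\le C_k(\omega)\,\|s\|_{W^{2k}(M_\chi)}$ for every integer $k\ge 0$; the symmetric bound follows by running the same argument for $J_\omega=I_{-\omega}$. Real-order Sobolev estimates then follow by complex interpolation between integer levels, giving the isomorphism $W^s(M_\chi)\cong W^s(\Mcchio)$ for every $s>0$.

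The only mildly delicate point is the Sobolev estimate at higher orders, where one must keep track of the coefficient functions produced by iterated differentiation; this is routine because $S$ is compact and $\omega$ is smooth, so only a polynomial in $\|\omega\|_{C^N}$ enters the constants $C_k(\omega)$.
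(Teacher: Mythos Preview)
Your proposal is correct and follows essentially the same route as the paper: the $L^2$-isometry and mapping property are taken from the computations preceding the lemma, and the Sobolev bound is obtained by combining the derivative formulas~\eqref{eq:derivatives_G_omega} with the Leibniz rule to control $W_1\cdots W_k(G_\omega\cdot s)$ by Sobolev norms of~$s$, followed by interpolation. Your treatment is in fact slightly more explicit than the paper's in two places---you spell out the inverse $J_\omega=I_{-\omega}$ to get surjectivity and the reverse Sobolev bound, and you phrase the key identity as $W(G_\omega\cdot s)=G_\omega\cdot(Ws+\phi_W(\omega)s)$---but the substance is identical.
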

\begin{proof}
  Since we have already verified the first part, it remains to show that
  $ I_\omega $ is a bounded map between $ W^s(M_\chi) $ and $ W^s(\Mcchio)
  $.

  For every $ j \geq 0 $, let us denote $ \|\omega\|_{\mathscr{C}^j(\fund)}
  $ the $ \mathscr{C}^j $-norm of $ \omega \colon \fund \to T^\ast\fund $.
  Using~\eqref{eq:derivatives_G_omega} we can bound
  \[
    \| \Theta^j G_\omega\|_{L^\infty(\fund)}, \, \| X^j G_\omega\|_{L^\infty
    (\fund)} , \, \| Y^j G_\omega\|_{L^\infty(\fund)} \leq (2\pi)^{j-1}
    \|\omega\|_{\mathscr{C}^j(\fund)}.
  \] Therefore, for any integer $ r>0 $ and every $ j_1, j_2, j_3 \in \Z_
  {\geq 0} $ such that $ j_1+j_2+j_3 = r $, we have
  \[
    \begin{split}
      & \|X^{j_1}Y^{j_2}\Theta^{j_3}(G_\omega \cdot s)\|_{L^2(\Mcchio)}
      \\
      & \qquad \leq (2\pi)^r \|\omega\|_{\mathscr{C}^r(\fund)} \left(
      \sum_{j_1+j_2+j_3 \leq r}\|X^{j_1} Y^{j_2}\Theta^{j_3}s\|_{L^2(M_\chi)}\right)
      \\
      & \qquad \leq C_{\omega}(r) \|s\|_{W^r(M_\chi)},
    \end{split}
  \] for some constant $ C_\omega(r)>0 $ depending only on $ \omega $
  and $ r $. This completes the proof for integers $ s $, the general
  case follows from interpolation.
\end{proof}

\subsection{Rapidly decaying functions}%
\label{sec:rap_dec_fns}

We now define the space of functions we are interested in.  Let us fix a
norm $ \| \cdot \| $ in the first homology $ H_1(S,\R) $.  Since $ \Gal $
is a rank-$ d $ sublattice of $ H_1(S,\Z) $, we have that
\[
  \sum_{[\gamma] \in \Gal} \|[\gamma]\|^{-d-\varepsilon} < \infty,
  \qquad \text{ for every\ } \varepsilon >0.
\] We define the space $ \rapid $ of what we call \emph{rapidly decaying
functions} as the subspace of $ \mathscr{C}^\infty(\Mc) $ consisting of
smooth functions $ f $ on $ \Mc $ such that
\begin{multline*}
 \sum_{[\gamma] \in \Gal} \|[\gamma]\|^{k +d +\varepsilon} \int_{[\gamma].\fund}
  |D^n f(x)|^2 \diff 
  \vol(x) < \infty,  \\
\text{ for every\ } k,n \in \Z_{\geq 0} \text{\
  and\ } \varepsilon >0,
  \end{multline*}
where $ |D^n f(x)|$ is any norm of the $n$-th derivative 
$D^nf(x) \colon T_xM_0 \otimes \cdots \otimes T_xM_0 \to \C$ 
of $f$ at $x \in M_0$. 
In particular, we observe that $ \rapid $ contains the space $
\mathscr{C}^\infty_c(\Mc) $ of smooth, compactly supported functions
since, for these functions, only finitely many terms in the sum above
are non-zero.

For any $ \chi \in \Galhat $ and $ f \in \rapid $, let us define
\[
  \widehat{f}(x \mid \chi) := \sum_{[\gamma] \in \Gal} f([\gamma].x)
  \chi([\gamma]).
\] By the Cauchy-Schwarz Inequality, for every $ n \in \Z_{\geq 0} $ and
$ \varepsilon >0 $ we have
\[
  \begin{split}
    \left\lvert D^n \widehat{f}( x \mid \chi) \right\rvert^2 &=
    \left\lvert \sum_{[\gamma] \in \Gal} D^n f([\gamma].x) \chi([\gamma])
    \right\rvert^2 \\
    &\leq \left( \sum_{[\gamma] \in \Gal} | D^n f([\gamma].x)|^2 \,
    \|[\gamma]\|^{d+\varepsilon} \right) \left( \sum_{[\gamma] \in \Gal}
    \|[\gamma]\|^{-d-\varepsilon} \right),
  \end{split}
\] and, from the definition of $ \rapid $, it follows that
\[
  \begin{split}
    &\int_\fund |D^n \widehat{f}(x \mid \chi)|^2 \diff
    \vol \\
    &\qquad \leq \left( \sum_{[\gamma] \in \Gal} \|[\gamma]\|^{d +\varepsilon}
    \int_{[\gamma].\fund} |D^n f|^2 \diff
    \vol \right) \left( \sum_{[\gamma] \in \Gal} \|[\gamma]\|^{-d-\varepsilon}
    \right) < \infty.
  \end{split}
\] Moreover, for every $ [\gamma]\in \Gal $, we have
\[
  \begin{split}
    \widehat{f}([\gamma]^{-1}.x \mid \chi) &= \sum_{[\sigma] \in \Gal} f
    ([\sigma]. ([\gamma]^{-1}.x)) \chi([\sigma]) = \sum_{[\sigma] \in
    \Gal} f([\sigma].x) \chi([\sigma \gamma]) \\
    &= \chi([\gamma]) \sum_{[\sigma] \in \Gal} f([\sigma].x) \chi([\sigma])
    = \chi([\gamma]) \widehat{f}(x \mid \chi).
  \end{split}
\] Thus, we obtain a well-defined map
\[
  \pi_\chi \colon \rapid \to \bigcap_{r\geq 0} W^r(M_\chi) \subset L^2(M_\chi).
\] Furthermore, the map $ \pi_\chi $ is $ G $-equivariant:  if we set $
(\rho(g)f)(x)=f(xg) $, then we have $ \pi_\chi(\rho(g)f)(x) = \widehat{f}
(xg \mid \chi) $.

By Lemma~\ref{lem:Abelian_covers:1}, we can identify $ \Galhat $ with the torus $
\harm / \harm(\Z) $.  Parseval Identity yields
\[
  \|f\|^2_{L^2(\Mc)} = \int_\fund \sum_{[\gamma] \in \Gal} |f([\gamma].x)|^2
  = \int_{\Galhat} \left( \int_\fund |\widehat{f}(x \mid \chi_\omega)|^2
  \diff
  \vol \right) \diff \omega,
\] hence, for any $ f_1, f_2 \in \rapid $, it follows that
\[
  \langle f_1, f_2 \rangle = \int_{\Galhat} \langle \pi_{\chi_\omega}(f_1),
  \pi_{\chi_\omega}(f_2) \rangle_{L^2(\Mchio)} \diff \omega.
\] By density of $ \mathscr{C}^\infty_c(\Mc) $, and hence of $ \rapid $,
in $ L^2(\Mc) $, we deduce the following lemma, which the reader can
compare with, e.g.,
\cite[Theorem 4.1]{Dei}.
\begin{lemma}
  \label{lem:L2_direct_integral} We have the following $ G $-equivariant
  isometric direct integral decomposition:
  \[
    L^2(\Mc) = \int_{\Galhat} L^2(\Mchio) \diff \omega, \qquad f = \int_
    {\Galhat} \pi_{\chi_\omega}(f) \diff \omega.
  \]
\end{lemma}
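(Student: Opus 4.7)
The preceding paragraph already establishes the analytic core of the statement: for $f_1, f_2$ in the dense subspace $\rapid \subset L^2(\Mc)$, the map $\Phi \colon f \mapsto (\pi_{\chi_\omega}(f))_{\omega \in \Galhat}$ satisfies the Parseval formula
\[
\langle f_1, f_2\rangle_{L^2(\Mc)} = \int_\Galhat \langle \pi_{\chi_\omega}(f_1), \pi_{\chi_\omega}(f_2)\rangle_{L^2(\Mchio)} \diff \omega.
\]
My plan is to promote this, via density and a measurable-field set-up, into a $G$-equivariant unitary identification of $L^2(\Mc)$ with the direct integral.

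First I would install the measurable-field structure on the family $\{L^2(\Mchio)\}_{\omega \in \Galhat}$. Lemma \ref{lem:Abelian_covers:2}, applied with the trivial character, provides smooth unitary isomorphisms $I_\omega \colon L^2(M) \to L^2(\Mchio)$, and using $\{I_\omega\}$ as a global trivialisation turns this family into a measurable field of Hilbert spaces over $\Galhat \cong \harm/\harm(\Z)$, so that $\int_\Galhat L^2(\Mchio) \diff \omega$ is well defined. For $f \in \rapid$, measurability (indeed smoothness) of $\omega \mapsto \pi_{\chi_\omega}(f)$ then follows from the absolute convergence of the defining series together with the smooth dependence of $G_\omega$ on $\omega$.

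By the Parseval identity above, $\Phi|_{\rapid}$ is an isometry into this direct integral, and since $\mathscr{C}^\infty_c(\Mc) \subseteq \rapid$ is dense in $L^2(\Mc)$, it extends uniquely to an isometry $\Phi \colon L^2(\Mc) \to \int_\Galhat L^2(\Mchio) \diff \omega$. For surjectivity I would exhibit a dense family of sections in the image: given $\psi \in \mathscr{C}^\infty_c(\Mc)$ and a trigonometric polynomial $\varphi$ on the torus $\Galhat$, the section $\omega \mapsto \varphi(\omega) \pi_{\chi_\omega}(\psi)$ has the form $\Phi(f)$ for an explicit $f \in \rapid$ obtained by Fourier inversion on $\Galhat$, and such sections span a dense subspace of $\int_\Galhat L^2(\Mchio) \diff \omega$.

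Finally, $G$-equivariance is the identity already noted just before the lemma, $\pi_{\chi_\omega}(\rho(g)f)(x) = \widehat{f}(xg \mid \chi_\omega) = (\rho(g) \pi_{\chi_\omega}(f))(x)$ for $f \in \rapid$, which extends to $L^2(\Mc)$ by continuity of $\Phi$. The only real difficulty is the measurable-field bookkeeping; the hard analytic content, namely the Parseval identity on $\rapid$, has already been carried out in the preceding display.
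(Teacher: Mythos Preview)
Your proposal is correct and follows essentially the same approach as the paper: use the Parseval identity on the dense subspace $\rapid$ (already established just before the lemma) and extend by density. The paper's own argument is in fact just the one-line remark ``by density of $\mathscr{C}^\infty_c(\Mc)$, and hence of $\rapid$, in $L^2(\Mc)$'' together with a reference to \cite{Dei}; you have simply spelled out the measurable-field set-up, the surjectivity, and the $G$-equivariance that the paper leaves implicit.
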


\subsection{Twisted $ G $-actions}

Let $ \one = \chi_0 \in \Galhat $ be the trivial character.  For any $ f
\in \rapid $ and $ \omega \in \harm $, we have
\[
  \begin{split}
    (\pi_\one \circ I_{-\omega})(f)(x) &= \pi_\one( G_{-\omega} \cdot f)
    (x) = \sum_{[\gamma] \in \Gal} (G_{-\omega} \cdot f) ([\gamma].x) \\
    &= \sum_{[\gamma] \in \Gal} \chi_{\omega}([\gamma]) G_{-\omega}(x) f
    ([\gamma].x) = (I_{-\omega} \circ \pi_{\chi_\omega})(f)(x).
  \end{split}
\] In particular, by Lemma~\ref{lem:Abelian_covers:2}, we deduce that for every $ f_1,f_2 \in
\rapid $ and $ \chi=\chi_\omega \in \Galhat $,
\begin{equation}
  \label{eq:to_M_one} \langle \pi_{\chi_\omega}(f_1), \pi_{\chi_\omega}(f_2)
  \rangle_{L^2(\Mchio)} = \langle \pi_{\one}(G_{-\omega} \cdot f_1), \pi_
  {\one}(G_{-\omega} \cdot f_2) \rangle_{L^2(M)}.
\end{equation}
In the following, for any $ f \in \rapid $, we will write $ \Omega_\omega
(f) = \pi_{\one}(G_{-\omega} \cdot f) \in L^2(M) $.

From Lemma~\ref{lem:L2_direct_integral} and~\eqref{eq:to_M_one}, it follows that we
can express the correlations of two functions $ f_1, f_2 \in \rapid $ as
an integral of correlations
\[
  \begin{split}
    \langle f_1, f_2\rangle_{L^2(\Mc)} &= \int_{\Galhat} \langle \pi_{\chi_\omega}
    (f_1), \pi_{\chi_\omega}(f_2) \rangle_{L^2(\Mchio)} \diff \omega \\
    &=\int_{\Galhat} \langle \Omega_\omega(f_1), \Omega_\omega(f_2)
    \rangle_{L^2(M)} \diff \omega
  \end{split}
\] on the same Hilbert space $ H= L^2(M) $. The corresponding unitary
representation $ \rho_\omega \colon G \to \mathcal{U}(H) $ of $ G=\PSL_2
(\R) $ on $ H $ is defined so that for all $ W \in \sl_2(\R) $ we have
\[
  W_\omega (\Omega_\omega(f)) = \Omega_\omega(Wf), \qquad \text{ where }
  \qquad W_\omega = \diff \rho_\omega(W).
\] In the following, we will denote by $ H_\omega $ the datum of the
Hilbert space $ H = L^2(M) $ together with the unitary representation $
\rho_\omega $.

Let us write explicit formulas for the generators $ \Theta_\omega, X_\omega,
Y_\omega $. For every $ W \in \sl_2(\R) $, since $ \pi_\one $ is $ G $-equivariant,
we have
\[
  \begin{split}
    \Omega_\omega(Wf) &= \pi_\one(G_{-\omega} \cdot Wf)= \pi_\one( W(G_{-\omega}
    \cdot f) - (WG_{-\omega}) \cdot f ) \\
    &= W(\Omega_\omega(f)) - \pi_\one( (WG_{-\omega}) \cdot f),
  \end{split}
\] thus, using~\eqref{eq:derivatives_G_omega} to compute the derivatives
of $ G_{-\omega} $ in the right-hand side above, we obtain
\begin{equation}
  \label{eq:generator_twisted_action}
  \begin{split}
    \Theta_\omega f(x) &= \Theta f(x), \\
    X_\omega f(x) &= Xf(x) -2 \pi \imath \, \omega_z(v)\, f(x),\\
    Y_\omega f(x) &= Yf(x)-2 \pi \imath \, \omega_z(v^{\perp})\, f(x),
  \end{split}
\end{equation}
for any $ f \in H $ and $ x = (z,v) \in M = T^1(S) $. %
In other words, we obtained the following lemma.
\begin{lemma}
  \label{lem:int_of_corrs} For every $ f_1,f_2 \in \rapid $, there exist
  two associated smooth family of vectors $ \omega \mapsto \Omega_\omega
  (f_j) \in H=L^2(M) $ for $ j=1,2 $ such that for every $ t \in \R $
  \[
    \langle f_1 \circ h_t, f_2 \rangle_{L^2(\Mc)} = \int_{\Galhat}
    \langle \rho_\omega (\exp(tU)).  \Omega_\omega(f_1) , \Omega_\omega(f_2)
    \rangle_{H} \diff \omega,
  \] where the differential $ \diff \rho_\omega $ of the action $ \rho_\omega
  \colon G \to \mathcal{U}(H) $ is defined by~\eqref{eq:generator_twisted_action}.
\end{lemma}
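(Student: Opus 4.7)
The plan is to derive the identity by concatenating three pieces of machinery already assembled in this section: the direct-integral decomposition of Lemma~\ref{lem:L2_direct_integral}, the fibrewise isometry $I_{-\omega}$ of Lemma~\ref{lem:Abelian_covers:2}, and the $G$-equivariance of $\pi_{\chi_\omega}$. Applied to the pair $(f_1\circ h_t, f_2)$, the direct-integral decomposition gives the correlation on $L^2(\Mc)$ as an integral over $\Galhat$ of correlations on $L^2(\Mchio)$. Equivariance of $\pi_{\chi_\omega}$ under right-translation transfers $h_t$ from the function on $\Mc$ to the regular $G$-action on sections of the line bundle $\Mchio$. Finally, \eqref{eq:to_M_one} (equivalently, the isometry $I_{-\omega}$) transports these correlations onto the fixed Hilbert space $H = L^2(M)$, producing the vectors $\Omega_\omega(f_j) = \pi_{\one}(G_{-\omega}\cdot f_j)$.

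The only substantive step is identifying the conjugated one-parameter group $t \mapsto I_{-\omega} \circ (h_t)_{\ast} \circ I_{-\omega}^{-1}$ on $H$ with $t \mapsto \rho_\omega(\exp(tU))$, where $\rho_\omega$ is the unitary representation determined by \eqref{eq:generator_twisted_action}. Both are strongly continuous unitary representations of the connected Lie group $G = \PSL_2(\R)$, so by the standard uniqueness of one-parameter unitary groups it suffices to match their infinitesimal generators. For each $W \in \sl_2(\R)$ this is a Leibniz computation: differentiating $I_{-\omega}( (I_{-\omega}^{-1} f) \circ \exp(tW))= G_{-\omega}\cdot\bigl( (G_{-\omega}^{-1} f) \circ \exp(tW)\bigr)$ at $t=0$ gives $Wf - (W G_{-\omega}/G_{-\omega})\, f$, and the explicit derivative formulas \eqref{eq:derivatives_G_omega} then yield exactly the expressions \eqref{eq:generator_twisted_action} for $X_\omega$, $Y_\omega$, $\Theta_\omega$.

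Smoothness of $\omega \mapsto \Omega_\omega(f_j)$ is inherited from the real-analytic dependence of $G_{-\omega}$ on $\omega$ already noted in Section~\ref{sec:line_bundles}, combined with the absolute convergence estimates on $\widehat{f_j}(x \mid \chi_\omega)$ derived in Section~\ref{sec:rap_dec_fns}. The main (mild) obstacle is careful bookkeeping in the generator-matching step: the signs of $\pm \omega$ and the direction of $I_{\pm\omega}$ must be tracked, but no new idea beyond the Leibniz rule and \eqref{eq:derivatives_G_omega} is required, so the proof is essentially a reorganisation of identities already proved above.
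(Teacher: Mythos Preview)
Your proposal is correct and follows essentially the same route as the paper: the text preceding the lemma already assembles the direct-integral decomposition (Lemma~\ref{lem:L2_direct_integral}), the isometry \eqref{eq:to_M_one}, and the Leibniz computation of $W_\omega$ via \eqref{eq:derivatives_G_omega} to obtain \eqref{eq:generator_twisted_action}, while the smoothness claim is handled immediately afterward in Lemma~\ref{lem:Omega_f}. Your framing of the generator identification as matching the conjugated action $I_{-\omega}\circ (h_t)_\ast \circ I_{-\omega}^{-1}$ with $\rho_\omega(\exp(tU))$ is exactly the paper's definition $W_\omega(\Omega_\omega(f)) = \Omega_\omega(Wf)$ unpacked, so there is no substantive difference.
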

We now verify that, for every fixed $ t \in \R $, the integrand function
\[
  \omega \mapsto \langle \uot.  \Omega_\omega(f_1),
  \Omega_\omega (f_2) \rangle_{L^2(M)}
\] is smooth,
where we denote $\uot = \rho_\omega(\exp(tU))$.

\begin{lemma}
  \label{lem:Omega_f} Let $ f \in \rapid $ be fixed.  Then, $ \omega
  \mapsto \Omega_\omega(f) $ is a $ \mathscr{C}^\infty $ map from $
  \harm $ to $ W^r(H_{\omega}) $ for every $ r \geq 0 $.
\end{lemma}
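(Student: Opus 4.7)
The plan is to exploit the explicit multiplicative form of $G_{-\omega}$ in order to reduce smoothness in $\omega$ to bounds on polynomial-growth perturbations of rapidly decaying functions. Since $\omega \mapsto \int_x^{x_0} j^\ast p^\ast \omega$ is $\R$-linear, for every $\eta \in \harm$ and $s \in \R$ we have $G_{-(\omega + s\eta)}(x) = \exp(-2\pi \imath s\, \phi_\eta(x))\, G_{-\omega}(x)$, where $\phi_\eta(x) := \int_x^{x_0} j^\ast p^\ast \eta$ is a well-defined smooth function on $\Mc$: since $\eta$ has vanishing periods on cycles represented by $\Gammac$, the one-form $j^\ast p^\ast \eta$ is exact on $\Mc$, and $\phi_\eta$ is independent of the path and the basepoint choice up to an additive constant.

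The first step would be the crucial growth estimate. Since $\phi_\eta([\gamma].x) - \phi_\eta(x)$ equals the period of $\eta$ over a loop in $S$ representing the homology class $[\gamma] \in \Gal \subseteq H_1(S,\Z)$ and $\eta$ is bounded on the compact surface $S$, one obtains a constant $C_\eta$ with $|\phi_\eta(x)| \leq C_\eta (1 + \|[\gamma]\|)$ uniformly for $x \in [\gamma]\fund$. The analogous polynomial bounds hold for the derivatives of $\phi_\eta$, which are linear in $\eta$ by formulas of the same shape as \eqref{eq:derivatives_G_omega}.

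Next, I would observe that for any $f \in \rapid$ and any $\eta_1, \dots, \eta_k \in \harm$, the product $\phi_{\eta_1} \cdots \phi_{\eta_k} \cdot f$ still belongs to $\rapid$: the polynomial weight $\|[\gamma]\|^k$ introduced on each translate $[\gamma]\fund$ is absorbed into the summability condition in the definition of $\rapid$, and a Leibniz expansion handles derivatives. Consequently, $\pi_\one(\phi_{\eta_1} \cdots \phi_{\eta_k} G_{-\omega} \cdot f) \in \bigcap_{r \geq 0} W^r(M)$. The $k$-fold directional derivative of $\Omega_\omega(f) = \pi_\one(G_{-\omega} \cdot f)$ in $(\eta_1, \dots, \eta_k)$ should then equal $(-2\pi \imath)^k\, \pi_\one(\phi_{\eta_1} \cdots \phi_{\eta_k} G_{-\omega} \cdot f)$. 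To justify the interchange of differentiation and summation, I would dominate the Taylor remainder of $s \mapsto \exp(-2\pi \imath s\, \phi_\eta(x))$ by a polynomial in $|s\, \phi_\eta(x)|$ (uniformly on compact sets of $\omega$) and conclude by dominated convergence on $\rapid$. Finally, since $H_\omega = L^2(M)$ as a Hilbert space and \eqref{eq:generator_twisted_action} shows that $\rho_\omega$ differs from $\rho_0$ by bounded multiplication operators depending smoothly on $\omega$, the norms $\|\cdot\|_{W^r(H_\omega)}$ and $\|\cdot\|_{W^r(H_0)}$ are equivalent locally uniformly in $\omega$; hence smoothness into the latter yields smoothness into the former.

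The main obstacle is the linear growth bound on $\phi_\eta$ under the $\Gal$-action: it requires identifying the increment $\phi_\eta([\gamma].x) - \phi_\eta(x)$ with a period of $\eta$ and comparing the norm on $H_1(S,\R)$ used in the definition of $\rapid$ with the natural pairing between cohomology classes of harmonic forms and the homology classes in $\Gal$. Once this polynomial bound is secured, the remaining steps are routine applications of the definition of $\rapid$ and the perturbative comparison between $\rho_\omega$ and $\rho_0$.
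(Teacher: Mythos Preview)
Your proposal is correct and follows essentially the same strategy as the paper: both exploit the linearity of $\omega \mapsto \int_x^{x_0} j^\ast p^\ast \omega$ to bound the $\omega$-derivatives of $G_{-\omega}$ by polynomials in $\|[\gamma]\|$ on $[\gamma]\fund$, then absorb this polynomial growth into the rapid-decay condition on $f$ to justify termwise differentiation of the defining series. Your treatment is more explicit (introducing $\phi_\eta$, showing $\phi_{\eta_1}\cdots\phi_{\eta_k}\cdot f\in\rapid$, and handling the Sobolev case via the norm equivalence between $W^r(H_\omega)$ and $W^r(H_0)$), but the paper's terse proof rests on exactly the same bound $\bigl|\partial_\omega^n G_{-\omega}([\gamma].x)\bigr|\le (1+\|[\gamma]\|)^n$.
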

\begin{proof}
  We prove that the map $ \omega \mapsto \Omega_{\omega}(f) $ from $
  \harm $ to $ H $ is $ \mathscr{C}^\infty $, the case $ r>0 $ can be
  proved in a similar way using~\eqref{eq:generator_twisted_action} and~\eqref{eq:derivatives_G_omega}.

  For every $ n \in \Z_{\geq 0} $, we have
  \[
    \left\lvert \frac{\diff^n}{(\diff \omega)^n} G_{-\omega}([\gamma].x)
    \right\rvert \leq (1+\|[\gamma]\|)^n.
  \] Therefore, the series
  \[
    \begin{split}
          \sum_{[\gamma] \in \Gal} &\left\| \frac{\diff^n}{(\diff \omega)^n} (G_
            {-\omega} \cdot f) ([\gamma].x) \right\|^2_{L^2(\Mchio)} \\
          & \quad\leq \sum_{%
    [\gamma] \in \Gal} (1+\|[\gamma]\|)^n \left\| f([\gamma].x) \right\|^2_
  {L^2(\Mchio)}
    \end{split}
  \] converges, since $ f $ is a rapidly decaying function according to
  \S~\ref{sec:rap_dec_fns}.  This proves that $ \Omega_\omega(f) $ is $ n $
  times differentiable with respect to $ \omega $ and
  \[
    \frac{\diff^n \Omega_\omega(f)}{(\diff \omega)^n}= \sum_{[\gamma]
    \in \Gal} \left(\frac{\diff^n}{(\diff \omega)^n} G_{-\omega} ([\gamma].x)\right)
    f([\gamma].x),
  \] which completes the proof.
\end{proof}

\begin{lemma}
  \label{lem:uot_smooth} Let $ \omega \mapsto f_\omega$ 
   be a family of smooth vectors in $W^r(H_\omega)$.
   Then, $\omega \mapsto \uot . f_\omega$ is a smooth map from $
   \harm $ to $ W^r(H_{\omega}) $, and for every $ n \in \Z_{\geq 0} $ and 
   for every $t\geq 1$, we have
  \[
    \left\| \frac{\diff^n}{(\diff \omega)^n} \uot.f_\omega
    \right\| \leq C_n(\omega) (1+\log t)^n \max_{k\leq n} \left\| 
    \frac{\diff^k f_\omega}{(\diff \omega)^k},
    \right\|
  \] 
  for some constant $C_n(\omega)$ independent of $t$.
\end{lemma}
\begin{proof}
From the definition of the twisted action $\rho_\omega$, we can write
\[
\uot. f_\omega = f_\omega \circ h_t \cdot \exp\left( 2 \pi \imath 
\int_{x}^{h_t(x)} j^{\ast} p^{\ast} \omega\right).
\]
Thus, there exists a constant $C_n$ and $k \leq n$ so that 
\[
    \begin{split}
 \left\| \frac{\diff^n}{(\diff \omega)^n} \uot.f_\omega
    \right\| & \leq C_n \left\| \frac{\diff^{n-k} f_\omega}{
    (\diff \omega)^{n-k}} \right\| \cdot 
\left\| \frac{\diff^k}{(\diff \omega)^k} 
\exp\left( 2 \pi \imath 
\int_{x}^{h_t(x)} j^{\ast} p^{\ast} \omega\right)
    \right\|_{\infty} \\
& \leq C_n  \left\| \frac{\diff^{n-k} f_\omega}{
    (\diff \omega)^{n-k}} \right\| \cdot  
\left\| \int_{x}^{h_t(x)} j^{\ast} p^{\ast} \omega 
    \right\|^k_{\infty}. 
    \end{split}
\]
It remains to bound the integral $\int_{x}^{h_t(x)} j^{\ast} p^{\ast} \omega$. 
The commutation relation between geodesic and horocycle flow tells us that 
$g_{\log t}(h_t(x)) = h_1(g_{\log t}(x))$, 
where $g_t$ denotes the geodesic flow.
Since $p^{\ast} \omega$ is exact on the cover $S_0$, Stokes' Theorem yields
\[
\int_{x}^{h_t(x)} j^{\ast} p^{\ast} \omega = 
\int_{x}^{g_{\log t}(x)} j^{\ast} p^{\ast} \omega +
\int_{g_{\log t}(x)}^{h_1(g_{\log t }(x))} j^{\ast} p^{\ast} \omega -
\int_{h_t(x)}^{g_{\log t}(h_t(x))} j^{\ast} p^{\ast} \omega.
\]
Since all paths in the right hand side above have length at most 
$1+\log t$, we deduce the bound
\[
\left\lvert \int_{x}^{h_t(x)} j^{\ast} p^{\ast} \omega 
    \right\rvert \leq 2 \|\omega\|_{\infty} (1+\log t),
\]
which completes the proof.
\end{proof}

In Section~\ref{sec:strong_ratner}, we strengthened Ratner's result to obtain
precise estimates of the asymptotics of the correlations above, depending on the 
unitary representation. 
We now need to understand the behaviour of the
spectrum of the associated Casimir operator, which is the subject of the
remainder of this section.

\subsection{The spectra around zero}%
\label{sec:spec_around_zero}

As above, we denote by $ H_\omega $ the space $ H=L^2(M) $ equipped with
the action $ \rho_\omega $ of $ G $. By the standard theory of unitary
representations of $ G $, every Hilbert space $ H_\omega $ decomposes
into irreducible representations $ H_{\omega, \lambda(\omega)} $, and on
each of these subspaces the Casimir operator
\[
  \square_\omega = -X^2_\omega -Y^2_\omega + \Theta^2_\omega
\] acts as multiplication by the real number $ \lambda(\omega) \in \R $.
If $ \lambda(\omega) >0 $, the space $ H_{\omega, \lambda(\omega)} $
contains an invariant vector by $ \Theta_\omega = \Theta $. Therefore,
in order to study the positive parameters $ \lambda(\omega) $, we can
study the eigenvalues of the twisted Laplace-Beltrami operator $ \Delta_
{S,\omega} = \diff_\omega \, \diff_\omega^\ast + \diff_\omega^\ast \,
\diff_\omega $.  Here
\[
  \diff_\omega = I_\omega \circ \diff \circ I_{-\omega}= \diff + 2 \pi
  \imath \, \omega \wedge
\] is the twisted differential operator and $ \diff_\omega^\ast $ its $
L^2 $ adjoint (see the proof of Proposition~\ref{thm:deriv_eigenvalues} below).

Recall that the maps $ I_{\pm \omega} $ are given by multiplication by $
G_{\pm \omega} $, which is a holomorphic function in the parameter $
\omega \in \harm $.  Thus, the family of elliptic operators $ \omega
\mapsto \Delta_{S,\omega} $ is holomorphic.  By classical results in
perturbation theory of selfadjoint operators (see, e.g.,
\cite[p.~392]{Kat}) the eigenvalues $ 0\leq \lambda_0(\omega) \leq
\lambda_1(\omega) \leq \cdots $ of $ \Delta_{S,\omega} $ are real
analytic functions of $ \omega $ in a neighborhood of $ 0 $.

We will now focus on the first eigenvalue $ \lambda_0(\omega) $ and we
will show that $ \lambda_0(\omega) \geq 0 $ with equality if and only if
$ \omega=0 $. Recall that we denoted by $ g $ the genus of the surface $
S $.

\begin{proposition}
  \label{thm:deriv_eigenvalues} For every $ \eta, \zeta \in \harm $, we
  have
  \[
    D\lambda_0(0)(\eta) = 0, \qquad \text{and} \qquad D^2\lambda_0(0)(\eta,
    \zeta) = \frac{2 \pi}{g-1} \langle \eta, \zeta \rangle.
  \] In particular, the bilinear form $ D^2\lambda_0(0) $ is positive
  definite.
\end{proposition}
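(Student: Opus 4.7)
\emph{Proof plan.} At $\omega = 0$ the operator $\Delta_{S,0}$ is the ordinary Hodge Laplacian on $S$, whose kernel on 0-forms is one-dimensional and spanned by the constant function $\phi_0 := 1/\sqrt{\vol(S)}$. In particular $\lambda_0(0) = 0$ is a \emph{simple} eigenvalue, and Rellich--Kato perturbation theory applied to the real-analytic family $\omega \mapsto \Delta_{S,\omega}$ yields a real-analytic family of unit eigenvectors $\omega \mapsto \phi(\omega)$ with $\phi(0) = \phi_0$. The plan is to Taylor-expand both sides of $\Delta_{S,\omega}\phi(\omega) = \lambda_0(\omega)\phi(\omega)$ at $\omega = 0$ and match orders, taking advantage of the fact that the unperturbed eigenvalue sits at the bottom of the spectrum.

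Using $\diff_\omega = \diff + 2\pi\imath\,\omega\wedge$, with formal adjoint $\diff_\omega^* = \diff^* - 2\pi\imath\,\iota_{\omega^{\#}}$, a direct computation on functions gives
\[
\Delta_{S,\omega} f = \Delta_S f + 2\pi\imath\bigl(\diff^*(f\omega) - \langle \diff f, \omega\rangle\bigr) + 4\pi^2|\omega|^2 f.
\]
Applied to the constant $\phi_0$ in direction $\eta$ this yields $(D_\eta \Delta_{S,\omega})|_0 \phi_0 = 2\pi\imath\,\phi_0\,\diff^*\eta$, which vanishes because $\eta \in \harm$ is harmonic and hence coclosed. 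The standard first-order formula $D\lambda_0(0)(\eta) = \langle (D_\eta \Delta_{S,\omega})|_0 \phi_0, \phi_0\rangle$ then delivers $D\lambda_0(0) = 0$. In the natural gauge $\langle \phi'(0), \phi_0\rangle = 0$, the first-order perturbation equation $\Delta_S \phi'(0) = -(D_\eta \Delta_{S,\omega})|_0 \phi_0 = 0$ together with simplicity of $\lambda_0(0)$ forces $\phi'(0) = 0$.

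With both $(D_\eta \Delta_{S,\omega})|_0 \phi_0 = 0$ and $\phi'(0) = 0$, the usual Hessian formula collapses to
\[
D^2 \lambda_0(0)(\eta,\zeta) = \bigl\langle (D^2\Delta_{S,\omega})|_0(\eta,\zeta)\,\phi_0, \phi_0\bigr\rangle.
\]
Only the quadratic term $4\pi^2|\omega|^2$ of the expansion contributes, and polarizing the pointwise identity $D^2|\omega|^2|_0(\eta,\zeta) = 2\langle \eta,\zeta\rangle_{T^*S}$ gives $(D^2\Delta_{S,\omega})|_0(\eta,\zeta) = 8\pi^2\langle \eta,\zeta\rangle_{T^*S}$, acting as multiplication by a pointwise function on $S$. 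Integrating against $\phi_0^2 = 1/\vol(S)$ produces
\[
D^2\lambda_0(0)(\eta,\zeta) = \frac{8\pi^2}{\vol(S)} \int_S \langle \eta,\zeta\rangle_{T^*S}\,\diff\vol = \frac{8\pi^2}{\vol(S)}\langle \eta,\zeta\rangle,
\]
where $\langle \cdot,\cdot\rangle$ now denotes the $L^2$ pairing on $\harm$. The Gauss--Bonnet formula for the compact hyperbolic surface $S$ gives $\vol(S) = 4\pi(g-1)$, which yields the announced constant $2\pi/(g-1)$; positive-definiteness of $D^2\lambda_0(0)$ is then immediate from positive-definiteness of the $L^2$ inner product on $\harm$.

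The one real subtlety is ensuring that the first-order correction $\phi'(0)$ vanishes, so that the Hessian is not corrupted by a resolvent-type ``off-diagonal'' contribution involving $(\Delta_S)^{-1}$ on $\phi_0^\perp$; this is where the harmonicity of $\eta$ (forcing $\diff^*\eta = 0$) does the essential work. Once that is in hand, the rest is a straightforward polarization of $|\omega|^2$ together with Gauss--Bonnet.
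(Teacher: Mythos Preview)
Your proof is correct and follows essentially the same approach as the paper: both arguments perturb the lowest eigenvalue of $\Delta_{S,\omega}$ around the constant eigenfunction $\phi_0=(4\pi(g-1))^{-1/2}$, use coclosedness of harmonic $\eta$ to kill the first-order and cross terms, and then read off the Hessian from the quadratic part $4\pi^2|\omega|^2$ together with Gauss--Bonnet. The only cosmetic difference is that the paper differentiates the Rayleigh quotient $\|\diff_\omega f_\omega\|^2$ directly, whereas you expand the operator $\Delta_{S,\omega}$ and invoke the standard Rellich--Kato formulas; the underlying computation is the same.
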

\begin{proof}
  Given a real valued harmonic one-form $ \omega\in \harm $ we consider
  the twisted differential operator defined above
  \[
    \diff_\omega :  \Omega^*(S) \to \Omega^*(S), \quad \diff_\omega:=
    \diff + 2\pi \,\imath \,\omega\wedge.
  \] As $ \omega $ is closed, $ \diff_\omega ^2=0 $.  The $ L^2 $
  adjoint of $ \diff_\omega $ on $ \Omega^*(S) $ is given by
  \[
    \diff^*_\omega= \diff^* + (2\pi \,\imath\,\omega \wedge)^\dag= \diff^*
    - *(2\pi \,\imath\,\omega\wedge *), \quad \text{on } \Omega^*(S)
  \] where $ \diff^*= -*\diff* $ is the usual co-differential and $ * $
  is usual Hodge operator verifying
  \[
    \langle \eta,\theta\rangle = \int_S \eta \wedge *\bar \theta, \quad\text
    {for all } {\eta},{\theta}\in \Omega^\ell(S).
  \] We can now define the twisted Laplace-Beltrami operator on $ \Omega^*
  (S) $ by setting
  \[
    \Delta^{\phantom{*}}_\omega= \diff_\omega^{\phantom{*}} \diff^*_\omega+
    \diff^*_\omega \diff^{\phantom{*}}_\omega.
  \] Let $ f_\omega\in L^2(S) $, $ \omega\in U\subset \harm $, be any
  smooth family of normalised eigenfunctions of the Laplace-Beltrami
  operator $ \Delta_\omega = \diff^*_\omega \diff^{\phantom{*}}_\omega $
  with simple eigenvalue $ \lambda_\omega $:
  \[
    \Delta_\omega f_\omega = \lambda_\omega f_\omega, \quad \forall\omega:
    \norm{f_\omega}=1,
  \] and set $ h(\omega) = \langle\Delta_\omega f_\omega, f_\omega\rangle=
  \langle \diff_\omega f_\omega,\diff_\omega f_\omega\rangle $.
  Differentiating the identity $ h(\omega)= \lambda_\omega $ we have,
  for any $ \eta\in \mathcal H $
  \[
    D \lambda_\omega (\eta) = 2 \Re\langle 2 \pi \imath\,f_\omega\eta,
    \diff_\omega f_\omega\rangle
  \] and
  \[
    \begin{split}
      \frac 1 2 D^2\lambda_\omega (\eta,\zeta) &= \Re\langle 2 \pi
      \imath\,f_\omega\eta,2 \pi \imath\,f_\omega\zeta\rangle + \Re\langle
      2 \pi \imath\,Df_\omega(\zeta) \eta, \diff_\omega f_\omega\rangle
      \\
      &\qquad+ \Re\langle 2 \pi \imath\,f_\omega \eta, \diff_\omega Df_\omega
      (\zeta)\rangle.
    \end{split}
  \] In particular when $ \omega=0 $ we can take $ f_0=(4 \pi (g-1))^{-1/2}
  $, where $ 4 \pi (g-1) $ is the area of $ S $.  Then the second terms
  of the above identity vanish because $ \diff_0 f_0=0 $.  The third
  term also vanishes because $ \langle 2 \pi \imath \eta, \diff Df_0(\zeta)\rangle=
  \langle 2 \pi \imath \, \diff^*\eta, Df_0(\zeta)\rangle $ and the
  harmonicity of $ \eta $ implies that $ \eta $ is co-closed, i.e.\ $
  \diff^* \eta=0 $.  Then the latter identity becomes
  \[
    \frac 1 2 D^2\lambda_0 (\eta,\zeta) = \frac{ \pi}{g-1} \langle \eta,\zeta\rangle
  \] since $ \eta $ and $ \zeta $ are real.
\end{proof}

We will also need the following result on the eigenvalues $ \lambda_j(\omega)
$.
\begin{lemma}
  \label{thm:other_eigenvalues} The value $ 0=\lambda_0(0) $ is a global
  minimum for $ \lambda_j(\omega) $.  Moreover, there exists $ \delta >0
  $ such that $ \lambda_j (\omega) > \delta $ for all $ j \geq 1 $ and
  all $ \omega \in \harm $.
\end{lemma}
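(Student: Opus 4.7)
The plan is to combine the non-negativity of the twisted Laplace--Beltrami operator with a compactness argument on the character torus $\Galhat\simeq \harm/\harm(\Z)$.

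First I would observe that, acting on $\mathscr{C}^\infty(S)$, the operator factorises as $\Delta_{S,\omega} = \diff_\omega^\ast \diff_\omega$, whence
\[
\langle \Delta_{S,\omega} f, f \rangle = \| \diff_\omega f\|^2 \ge 0.
\]
This yields $\lambda_j(\omega) \ge 0$ for every $j$ and every $\omega$, and since $\lambda_0(0)=0$ is attained on constants, this value is a global minimum of the family, proving the first assertion.

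Next, I would use a gauge argument to reduce the uniform lower bound to a compact set. For $\omega'\in \harm(\Z)$ the character $\chi_{\omega'}$ is trivial, so $G_{\omega'}$ descends to a well-defined unimodular function on $S$; multiplication by it is a unitary isomorphism of $L^2(S)$ intertwining $\Delta_{S,\omega}$ with $\Delta_{S,\omega+\omega'}$. Hence the spectrum of $\Delta_{S,\omega}$ depends only on the class of $\omega$ in the compact torus $\harm/\harm(\Z)\simeq \Galhat$.

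The crucial step is to prove $\lambda_1(\omega)>0$ for every $\omega$ in this torus. I would argue that the null space of $\Delta_{S,\omega}$ is at most one-dimensional: indeed, $\diff_\omega f=0$ forces $\diff (\log f)=-2\pi\imath\omega$, so $f$ must have the local form $C\exp(-2\pi\imath \int \omega)$, which defines a single-valued function on $S$ only when the periods of $\omega$ are all integers, i.e.\ $\omega\in\harm(\Z)$; and in that case the kernel is the one-dimensional space of constants twisted by $G_{-\omega}$. Thus $\lambda_1(\omega)>0$ for every $\omega$, either because $\lambda_0(\omega)>0$ when $\omega\notin\harm(\Z)$, or by the spectral gap of the ordinary Laplacian on the compact surface $S$ when $\omega\in\harm(\Z)$.

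Finally, by the analytic perturbation theory invoked just before the statement, the function $\omega\mapsto \lambda_1(\omega)$ is continuous on $\Galhat$ and, as we have just shown, strictly positive; by compactness it attains a positive minimum $\delta>0$, and since $\lambda_j(\omega)\ge\lambda_1(\omega)$ for $j\ge 1$, this $\delta$ is the uniform lower bound required. The only subtlety I anticipate lies in the continuity of $\lambda_1$ across points where eigenvalues may cross, but this follows at once from the min--max characterisation of the ordered eigenvalues of a continuous family of self-adjoint operators with compact resolvent.
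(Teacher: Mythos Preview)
Your argument is correct. The paper's own proof is a one-line appeal to the min--max principle together with a page reference to Phillips--Sarnak, so you have supplied considerably more detail than the authors do. The two approaches overlap in spirit: non-negativity of $\Delta_{S,\omega}=\diff_\omega^\ast\diff_\omega$ gives the first assertion in both, and min--max is what ultimately yields continuity of $\lambda_1$ on the compact torus. Where you differ is in making explicit the two ingredients the citation hides: the $\harm(\Z)$-periodicity of the spectrum via the gauge $G_{\omega'}$, and the pointwise positivity of $\lambda_1(\omega)$ via the direct computation of $\ker\diff_\omega$. Your route is more self-contained; the paper's is shorter. One small remark on your kernel step: writing $\diff(\log f)=-2\pi\imath\omega$ presupposes $f$ is nowhere vanishing, which does follow (the equation $\diff f=-2\pi\imath\omega f$ is a linear first-order system along any path, so a zero propagates everywhere by connectedness of $S$), but it is worth saying so.
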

\begin{proof}
  Both assertions follow from the min-max principle, see
  \cite[p.  289]{PhSa}.
\end{proof}

%%%%%%%%%%

\section{Infinite volume mixing asymptotics}%
\label{sec:proof_main_thm}

This section is devoted to the proof of Theorem~\ref{thm:main1}. We start with a technical result.

\subsection{A \lq\lq stationary phase\rq\rq\ estimate}

The following lemma is a standard estimate of a certain type of
integrals, whose asymptotics is controlled by the behaviour of the
exponent in the integrand function near the stationary point. We present
a proof for the sake of completeness, see also
\cite[Lemma 2.3]{PhSa}.
\begin{lemma}
  \label{thm:stationary_phase} Let $ V $ be a $ d $-dimensional vector
  space and let $ U $ be an open neighborhood of $ 0 $.  Let $ v \colon
  U \to \R $ be a smooth function such that $ v(0) = 0 $ and $ v(\xi) <0
  $ for $ \xi \neq 0 $.  Assume that $ H = - D^2v(0) $ is positive
  definite. Let $ a \colon U \to \R $ be a $ \mathscr{C}^\infty $
  function.  Then, there exist constants $ (c_j)_{j \in \N} $ such that
  for all $ N \geq 1 $ we have
  \[
    \int_U e^{T v(\xi)} a(\xi) \diff \xi = \frac{(2 \pi)^{\frac{d}{2}}}{T^
    {\frac{d}{2}} \sqrt{\det H}} a(0) + \sum_{j=1}^N \frac{c_j}{T^{j+\frac
    {d}{2}}} + o(T^{-N-\frac{d}{2}}).
  \]
\end{lemma}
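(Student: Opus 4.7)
The plan is to reduce the integral to a standard Gaussian by the Morse lemma, then extract the asymptotic expansion from a Taylor expansion of the amplitude. The argument has four steps, of which only the setup around the Morse normal form requires any care.

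First, I would localize. Since $v$ is continuous with $v(\xi)<0$ on $U \setminus \{0\}$, for any neighborhood $V$ of $0$ with $\overline{V}\subset U$ compact we have $\sup_{U\setminus V} v \le -c$ for some $c>0$. Choosing a smooth cutoff $\chi$ supported in $V$ with $\chi\equiv 1$ near $0$ and writing $a = \chi a + (1-\chi) a$, the contribution of $(1-\chi)a$ to the integral is bounded by $\|a\|_{L^1} e^{-cT}$ (this is where one needs $a$ to decay enough for the integral to make sense, which is automatic in the intended application where $\widehat{\Gal}$ is compact). Such an exponentially small error is absorbed into the remainder $o(T^{-N-d/2})$ for every $N$.

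Second, because $H=-D^2v(0)$ is positive definite, the Morse lemma gives a diffeomorphism $\phi\colon W'\to W$ between open neighborhoods of $0$ with $\phi(0)=0$ and $v(\phi(y))=-\tfrac{1}{2}|y|^2$. A direct comparison of quadratic forms at $0$ yields $|\det D\phi(0)|=1/\sqrt{\det H}$. After substitution, and shrinking the cutoff if necessary, I may rewrite
\[
\int_U e^{Tv(\xi)} a(\xi)\,\diff\xi
= \int_{\R^d} e^{-T|y|^2/2}\, A(y)\,\diff y \;+\; O(e^{-cT}),
\]
where $A(y) = \chi(\phi(y))\,a(\phi(y))\,|\det D\phi(y)|$ is smooth and compactly supported, with $A(0) = a(0)/\sqrt{\det H}$.

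Third, I Taylor expand $A$ to order $2N+1$ about $0$,
\[
A(y)=\sum_{|\alpha|\le 2N+1}\frac{\partial^\alpha A(0)}{\alpha!}\,y^\alpha + R_N(y),\qquad |R_N(y)|\le C|y|^{2N+2},
\]
and insert this expansion. The Gaussian moments satisfy
\[
\int_{\R^d} e^{-T|y|^2/2}\,y^\alpha\,\diff y =
\begin{cases} 0 & \text{if some }\alpha_i\text{ is odd},\\
C_\alpha\, T^{-d/2-|\alpha|/2} & \text{otherwise},\end{cases}
\]
after again discarding an exponentially small tail coming from extending the integral from the support of $A$ to all of $\R^d$. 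The $\alpha=0$ term produces exactly the leading coefficient $(2\pi)^{d/2}A(0)\,T^{-d/2} = (2\pi)^{d/2}a(0)\,T^{-d/2}/\sqrt{\det H}$; the contributions from $|\alpha|=2j$ collect into $c_j/T^{j+d/2}$ for each $j\ge 1$.

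Finally, the remainder bound: substituting $|R_N(y)|\le C|y|^{2N+2}$ and computing $\int_{\R^d} |y|^{2N+2} e^{-T|y|^2/2}\,\diff y = O(T^{-N-1-d/2})$ shows that the remainder is $O(T^{-N-1-d/2}) = o(T^{-N-d/2})$. The only step that is not entirely automatic is the verification that the Jacobian of the Morse change of variables contributes precisely the factor $1/\sqrt{\det H}$ in the leading term; this is the main bookkeeping obstacle, but it follows by matching the quadratic parts of $v\circ\phi$ and $-\tfrac12|y|^2$ at the origin.
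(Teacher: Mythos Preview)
Your proof is correct and follows essentially the same strategy as the paper's: localize near the critical point using $v<0$ away from $0$, apply the Morse lemma to reduce to a Gaussian, Taylor expand the resulting amplitude, and use the parity of Gaussian moments to get the expansion in powers of $T^{-1}$. Your version is in fact slightly more careful in two places---you expand to order $2N+1$ (the paper writes $N$, which strictly speaking needs relabeling to match the stated remainder), and you explicitly flag the implicit integrability/compactness hypothesis on $a$ and $U$ that both arguments require for the tail estimate.
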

\begin{proof}
  By assumption, 0 is a non-degenerate critical point of $ v $.
  Therefore, by the Morse Lemma, there exist open neighbourhoods $ U',W'
  \subset U $ of 0 and a smooth diffeomorphism $ \rho \colon W' \to U' $,
  $ \rho(\widetilde \vartheta) = \xi $, with $ \rho(0)=0 $ and $ \det D\rho
  (0)=1 $, such that
  \[
    v(\xi) = v(\rho(\widetilde \vartheta)) = \frac{1}{2} D^2v(0)( {\widetilde\vartheta},
    \widetilde \vartheta).
  \] Without loss of generality, up to choosing a smaller $ W' $, we can
  assume that $ \det D\rho(\widetilde \vartheta) >0 $ on $ W' $.
  Applying $ \rho $, followed by another linear change of variable, we
  get
  \begin{equation*}
    \begin{split}
      \int_{U'} e^{T v(\xi)} a(\xi) \diff \xi &= \int_{W'} e^{-\frac{T}{2}
      H({\widetilde\vartheta}, \widetilde \vartheta)}(a\circ\rho)(\widetilde\vartheta)
      \det D\rho(\widetilde \vartheta) \diff \widetilde \vartheta \\
      &=\frac{1}{\sqrt{\det H}} \int_W e^{-\frac{T}{2}(\vartheta_1^2 +
      \cdots + \vartheta_d^2)} b(\vartheta) \diff \vartheta,
    \end{split}
  \end{equation*}
  where $ W \subset U $ is an open neighbourhood of 0 and $ b $ is a
  smooth function such that $ b(0)=a(0) $. Therefore, we can find $
  \alpha, \delta >0 $, which depend only on $ v $ and $ a $, such that
  \[
    \int_U e^{T v(\xi)} a(\xi) \diff \xi = \frac{1}{\sqrt{\det H}} \int_
    {\|\vartheta\|\leq \delta} e^{-\frac{T}{2}\|\vartheta\|^2} b(\vartheta)
    \diff \vartheta + O(e^{-\alpha T}),
  \] where we used the fact that $ v(\xi) <0 $ for all $ \xi \neq 0 $.

  In order to complete the proof, we are left to estimate the term
  \begin{equation}
    \label{eq:to_estimate_sf}
    \begin{split}
      &\frac{1}{\sqrt{\det H}} \int_{\|\vartheta\|\leq \delta} e^{-\frac
      {T}{2}\|\vartheta\|^2} b(\vartheta) \diff \vartheta \\
      & \qquad = \frac{1}{T^{\frac{d}{2}}\sqrt{\det H}} \int_{\|\vartheta\|\leq
      \delta\sqrt{T}} e^{-\frac{1}{2}\|\vartheta\|^2} b\left(\frac{\vartheta}
      {\sqrt{T}} \right) \diff \vartheta.
    \end{split}
  \end{equation}
  For any $ N \geq 1 $, we can write the Taylor expansion of $ b $ as
  \[
    b\left(\frac{\vartheta}{\sqrt{T}} \right) = b(0) + \sum_{|k| \leq N}
    \frac{D^kb(0)}{k!} \frac{\vartheta^k}{T^{\frac{|k|}{2}}} + R_N\left(\frac
    {\vartheta}{\sqrt{T}} \right),
  \] where $ k=(k_1,\dots, k_d) \in \Z_{\geq 0}^d $ is a multi-index, $
  |k| = k_1 + \cdots + k_d $, and the last term satisfies
  \begin{equation}
    \label{eq:remainder_sf} \left \lvert R_N\left(\frac{\vartheta}{\sqrt
    {T}} \right) \right\rvert \leq C_{a,v} \frac{\vartheta_1^{k_1}
    \cdots \vartheta_d^{k_d}}{(N+1)!  \, T^{\frac{N+1}{2}}},
  \end{equation}
  for a constant $ C_{a,v} $ depending on $ b $, and hence on $ a $ and $
  v $ only. Plugging this expansion into~\eqref{eq:to_estimate_sf}, and
  recalling $ b(0)=a(0) $, we get
  \begin{equation*}
    \begin{split}
      &\frac{1}{\sqrt{\det H}} \int_{\|\vartheta\|\leq \delta} e^{-\frac
      {T}{2}\|\vartheta\|^2} b(\vartheta) \diff \vartheta = \frac{a(0)}{T^
      {\frac{d}{2}}\sqrt{\det H}} \int_{-\infty}^{\infty} e^{-\frac{1}{2}\|\vartheta\|^2}
      \diff \vartheta \\
      & \qquad + \sum_{|k| \leq N} \frac{D^kb(0)}{k!  \, T^{\frac{|k|+d}
      {2}}\sqrt{\det H}} \int_{-\infty}^{\infty} e^{-\frac{1}{2}\|\vartheta\|^2}
      \vartheta^k \diff \vartheta \\
      & \qquad + \frac{1}{T^{\frac{d}{2}}\sqrt{\det H}} \int_{-\infty}^{\infty}
      e^{-\frac{1}{2}\|\vartheta\|^2} R_N\left(\frac{\vartheta}{\sqrt{T}}
      \right) \diff \vartheta + o(e^{-\sqrt{T}}).
    \end{split}
  \end{equation*}
  The conclusion follows from the bound~\eqref{eq:remainder_sf} and the
  fact that the terms with $ |k| $ odd do not appear, since the
  integrals $ \int_{-\infty}^{\infty} e^{-\frac{1}{2}\|\vartheta\|^2}
  \vartheta^k \diff \vartheta $ vanish for $ |k| $ odd.
\end{proof}

%%%%%%%%%%

\subsection{Proof of Theorem~\ref{thm:main1}}

Let us fix $ f_1, f_2 \in \rapid $. By Lemma~\ref{lem:int_of_corrs} and Lemma~\ref{lem:Omega_f}, we can write

\[
  \langle f_1 \circ h_t, f_2 \rangle_{L^2(\Mc)} = \int_{\Galhat} \langle
  \uot. \Omega_\omega(f_1) , \Omega_\omega(f_2) \rangle_{H_\omega} \diff
  \omega,
\] where $ \Omega_\omega(f_j) $ are smooth families of vectors in $ W^r(H_\omega)
$ for every $r \geq 0$.
We now show that the main contribution in the integral above comes from 
the projection on the subspaces $H_{\omega, \lambda_0(\omega)}$ corresponding 
to the smallest positive eigenvalue $\lambda_0(\omega)$ of  the twisted Laplacian $ \Delta_{S,\omega}$.

For any $ \omega \in \harm $, we have an orthogonal decomposition into
invariant subspaces
\[
  H_\omega = H_{\omega, \lambda_0(\omega)} \oplus H_{\omega, \lambda_0(\omega)}^
  {\perp},
\] where the Casimir operator $ \square_\omega $ acts on $ H_{\omega,
\lambda_0(\omega)} $ as the constant $ \lambda_0(\omega) $, and $ H_{\omega,
\lambda_0(\omega)}^{\perp} $ is the orthogonal complement of $ H_{\omega,
\lambda_0(\omega)} $. According to this decoposition, we write
\begin{equation*}
  \begin{split}
    \Omega_\omega(f_j) = \Omega^{\lambda_0(\omega)}(f_j) + \Omega_\omega^{\perp}
    (f_j), \quad \text{where}& \quad \Omega^{\lambda_0(\omega)}(f_j) \in
    W^3(H_{\omega, \lambda_0(\omega)}),\\
    \text{and}& \qquad \Omega_\omega^{\perp}(f_j) \in W^3(H_{\omega, \lambda_0(\omega)}^
    {\perp}).
  \end{split}
\end{equation*}
By Lemma~\ref{thm:other_eigenvalues}, there exists $\delta >0$ such that the 
spectrum of $\square_\omega$ restricted to $H_{\omega, \lambda_0(\omega)}^{\perp}$ 
does not intersect the interval $[0,\delta)$. Thus, by Theorem~\ref{thm:Ratner_mix},
up to changing $\delta >0$, we deduce
\[
\abs{ \langle \uot.  \Omega_\omega^{\perp}(f_1) , \Omega_\omega^{\perp}(f_2)
    \rangle_{H_\omega} } \leq C \|\Omega_\omega(f_1)\|_{W^3} \, 
\|\Omega_\omega(f_1)\|_{W^3} t^{-\delta},
\]
for some absolute constant $ C >0 $.
This implies that 
\begin{equation*}
  \begin{split}
    &\abs{ \langle f_1 \circ h_t, f_2 \rangle_{L^2(\Mc)} 
-  \int_{\Galhat} \langle \uot.  \Omega^{\lambda_0(\omega)}(f_1)
    , \Omega^{\lambda_0(\omega)} (f_2) \rangle_{H_\omega} } \diff \omega \\
    & \qquad \leq C \|f_1\|_{W^3} \, \|f_2\|_{W^3} t^{-\delta}.
  \end{split}
\end{equation*}

We now focus on the space $ H_{\omega, \lambda_0(\omega)} $ and we show that we 
can reduce to consider a neighbourhood of $0 \in \Galhat$. 
By the
representation theory of compact Abelian groups, $ H_{\omega, \lambda_0(\omega)}
$ admits a orthonormal basis of eigenvectors $ e_j(\omega) $ of $ \Theta_\omega
$, with eigenvalue $ j \in \Z $. Any such element $ e_j(\omega) $ is an
eigenvector of the elliptic selfadjoint operator $ \Delta_\omega =
\square_\omega -2\Theta^2_\omega $, hence, by classical results in
perturbation theory (see, e.g.,
\cite[p.~392]{Kat}), we can choose these bases to depend analytically in
$ \omega $ in a neighborhood $ U \subset \Galhat $ of $ 0 $, 
so that $\omega \mapsto  \Omega^{\lambda_0(\omega)}(f_j)$, for $j=1,2$, 
are analytic families of smooth vectors. Since $ 0=\lambda_0
(0) $ is a global minimum, again by Lemma~\ref{lem:Omega_f}, up to changing the constants $ C $ and $ \delta $, we
obtain
\begin{multline}
  \label{eq:step1}
    \abs{\langle f_1 \circ h_t, f_2 \rangle_{L^2(\Mc)} - \int_{U}
    \langle \uot. \Omega^{\lambda_0(\omega)}(f_1) , \Omega^{\lambda_0(\omega)}
    (f_2) \rangle_{H_\omega} \diff \omega } \\
    \leq C \|f_1\|_{W^3} \, \|f_2\|_{W^3} t^{-\delta}.
\end{multline}
Moreover, we can assume that $ \lambda_0(\omega) < 1/4 $ for all $
\omega \in U $.

Finally, we now show that we can use the result of Section~\ref{sec:strong_ratner}, 
together with the \lq\lq stationary phase\rq\rq\ estimate of Lemma~\ref{thm:stationary_phase}. 
Combining~\eqref{eq:step1} with Corollary~\ref{cor:strong_Ratner} and the 
Cauchy-Schwarz Inequality, we obtain
\begin{equation}
  \label{eq:step2} \abs{\langle f_1 \circ h_t, f_2 \rangle_{L^2(\Mc)} -
  \int_{U} A_\omega(f_1,f_2) t^{-1+\nu_0(\omega)} \diff \omega } \leq C
  \|f_1\|_{W^3} \, \|f_2\|_{W^3} t^{-\delta},
\end{equation}
where $ \nu_0(\omega) = \sqrt{1-4\lambda_0(\omega)} $ and $A_\omega(f_1,f_2)$ 
is defined in \eqref{eq:def_avw} in Corollary~\ref{cor:strong_Ratner}. 
In order to
conclude, we show that we can apply Lemma~\ref{thm:stationary_phase} to the integral above, which can be rewritten
as
\[
  \int_{U} A_\omega(f_1,f_2) t^{-1+\nu_0(\omega)} \diff \omega =\int_{U}
  A_\omega(f_1,f_2) e^{(\log t)(-1+\nu_0(\omega))} \diff \omega.
\] 
\begin{lemma}
The map $\omega \mapsto A_\omega(f_1,f_2)$ is a smooth map from $\harm$ 
to $\C$.
\end{lemma}
\begin{proof}
By Lemma~\ref{lem:Omega_f} and the previous discussion, 
$v(\omega):=\Omega^{\lambda_0(\omega)}(f_1)$ and 
$w(\omega):=\Omega^{\lambda_0(\omega)}(f_2)$ are smooth 
families of vectors in $W^r(H_\omega)$ for every $r \geq 0$.
By the definition \eqref{eq:def_avw} of $A_\omega(f_1,f_2)$ from 
Corollary~\ref{cor:strong_Ratner}, we need to verify that the derivatives of 
the function 
\[
r^{-1-\nu(\omega)} F_{v(\omega),w(\omega)}(r)
\]
are integrable over $[1,\infty)$. This follows from Lemma~\ref{lem:uot_smooth} 
and the bound 
\[
\left\lvert \frac{\diff^n}{(\diff \omega)^n} r^{-1-\nu(\omega)} \right\rvert 
\leq C_n(\omega)  (1+\log r)^n r^{-1-\nu(\omega)},
\]
for some constant $C_n(\omega)$ independent of $r\geq 1$.
\end{proof}
Finally, by Proposition~\ref{thm:deriv_eigenvalues}, we have that
\[
  D(-1+\nu_0(\omega))(0) = - D(\sqrt{1-4\lambda_0(\omega)})(0) = -2 D\lambda_0
  (0) = 0,
\] and also that
\[
  H:=D^2(1-\nu_0(\omega)(0) = -D^2(\sqrt{1-4\lambda_0(\omega)}) = 2D^2\lambda_0
  (0)
\] is positive definite. 
Therefore, by~\eqref{eq:step2} and by Lemma~\ref{thm:stationary_phase}, there exists constants $ (c_j)_{j \in \N} $
such that for every $ N \geq 1 $ we can write
\[
  \begin{split}
    & \langle f_1 \circ h_t, f_2 \rangle_{L^2(\Mc)} \\
    & \qquad = \frac{(2\pi)^{\frac{d}{2}}}{ (\log t)^{\frac{d}{2}} \sqrt
    {\det H} } A_0(f_1,f_2) + \sum_{j=1}^N \frac{c_j}{ (\log t)^{j+\frac
    {d}{2}}} + o\left( (\log t)^{-N-\frac{d}{2}}\right) \\
    & \qquad =\left(\frac{g-1}{2}\right)^{\frac{d}{2}} \sigma(\Gammac)
    \frac{A_0(f_1,f_2)}{(\log t)^{\frac{d}{2}}} + \sum_{j=1}^N \frac{c_j}
    { (\log t)^{j+\frac{d}{2}}}+ o\left( (\log t)^{-N-\frac{d}{2}}\right),
  \end{split}
\] where $ \sigma(\Gammac) $ is the determinant of the matrix associated
to the bilinear form $ \langle \eta, \zeta\rangle $ on the space $ \harm
$ of harmonic one-forms.

Finally,
\begin{equation*}
  \begin{split}
    A_0(f_1,f_2) &= \lim_{t \to \infty} \langle \Omega_0(f_1 \circ h_t),
    \Omega_0(f_2) \rangle_{H_0} \\
    &= \int_{M}\Omega_0(f_1) \diff
    \vol \cdot \int_{M} \overline{ \Omega_0(f_2) }\diff
    \vol \\
    &= \int_{M} \pi_\one (f_1) \diff
    \vol \cdot \int_{M} \overline{ \pi_\one(f_2) } \diff
    \vol =\int_{M_0} f_1 \diff
    \vol \cdot \int_{M_0} \overline{ f_2 }\diff
    \vol,
  \end{split}
\end{equation*}
which completes the proof of Theorem~\ref{thm:main1}.

%%%%%%%%%%

\section{Proof of Theorem~\ref{thm:main2}}%
\label{sec:proof_thm_B}

In this section we prove Theorem~\ref{thm:main2} using the theory we developed in Section~\ref{sec:spectrum-an-abelian}.

Let us denote by $ \Gal_k $ the Galois group $ \Gamma / \Gamma_k $ of
the cover $ p_k \colon M_k \to M $.  The surjective map $ \psi \colon
\Gamma \to \Z^d $, as in the assumptions, induces an isomorphism of
finite Abelian groups
\[
  \Gal_k \simeq \Z / (N_1^{(k)}\Z) \times \cdots \times \Z / (N_d^{(k)}\Z).
\] By Lemma~\ref{lem:Abelian_covers:1}, for any $ \chi \in \Galhat_k $, there exist $
\omega \in \harm $ such that
\[
  \chi([\gamma]) = \exp\left( 2 \pi \imath \int_{x}^{[\gamma].x} j_k^{\ast}
  \, p_k^{\ast} \, \omega\right),
\] where $ j_k \colon M_k= T^1(S_k) \to S_k $ is the canonical
projection. More precisely, the same argument as in Lemma~\ref{lem:Abelian_covers:1} shows that $ \Galhat_k $ coincides with the
exponential of $ \harm(k)/\harm(\Z) $, where $ \harm(k) $ is the $ \Z $-module
of all harmonic forms $ \omega $ such that
\[
  \int_{x}^{[\gamma].x} j_k^{\ast} \, p_k^{\ast} \, \omega \in \Z \qquad
  \text{ for all } [\gamma] \in \Gal_k.
\] Notice that
\[
  \harm(k)/\harm(\Z) \simeq (1/N_1^{(k)})\Z/\Z \times \cdots \times (1/N_d^
  {(k)})\Z/\Z.
\] As in \S~\ref{sec:line_bundles}, let us denote by $ L^2(M_\chi) $ the space of $
L^2 $-sections of the line bundle with holonomy $ \chi $.
\begin{lemma}
  We have
  \[
    L^2(M_k) = \bigoplus_{\omega \in \harm(k)/\harm(\Z)} L^2(\Mchio),
  \] in particular
  \[
    \Spec(\square_j) \cap \R_{\geq 0}= \bigcup \{\lambda_n(\omega) :  n
    \in \N,\ %
    \omega \in \harm(k)/\harm(\Z) \}.
  \]
\end{lemma}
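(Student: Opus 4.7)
The plan is to prove this as a finite-volume analogue of the decomposition
Lemma~\ref{lem:L2_direct_integral}: since $\mathscr{G}_k = \Gamma / \Gamma_k$ is a \emph{finite} Abelian group acting on $M_k$ by deck transformations commuting with the right $G$-action, the integral over $\widehat{\mathscr{G}}$ in Lemma~\ref{lem:L2_direct_integral} becomes a finite direct sum over $\widehat{\mathscr{G}}_k$, and the task is to identify $\widehat{\mathscr{G}}_k$ with $\harm(k)/\harm(\Z)$ and recognise each isotypic component as $L^2(M_{\chi_\omega})$.

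First I would decompose $L^2(M_k)$ into isotypic components for the $\mathscr{G}_k$-action: for each $\chi \in \widehat{\mathscr{G}}_k$ set
\[
L^2(M_k)_\chi := \{ f \in L^2(M_k) \, : \, f([\gamma]^{-1}.x) = \chi([\gamma]) f(x) \text{ for all } [\gamma] \in \mathscr{G}_k \}.
\]
Since $\mathscr{G}_k$ is a finite Abelian group, Peter--Weyl (or elementary projection onto characters via the average $\frac{1}{|\mathscr{G}_k|}\sum_{[\gamma]} \overline{\chi([\gamma])} f([\gamma]^{-1}.\cdot)$) yields the orthogonal direct sum $L^2(M_k) = \bigoplus_{\chi \in \widehat{\mathscr{G}}_k} L^2(M_k)_\chi$, and each summand $L^2(M_k)_\chi$ is tautologically identified with $L^2(M_\chi)$ by the alternative description of sections of $M_\chi$ given in \S\ref{sec:line_bundles}.

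Second, I would identify $\widehat{\mathscr{G}}_k$ with $\harm(k)/\harm(\Z)$. The very same argument as in Lemma~\ref{lem:Abelian_covers:1}, applied with the cover $p_k \colon M_k \to M$ instead of $p \colon \Mc \to M$, shows that $\omega \mapsto \chi_\omega$ defined by the period integral is a surjective homomorphism from the full space of harmonic one-forms on $S$ to $\widehat{\mathscr{G}}_k$, and its kernel is precisely $\harm(\Z)$ inside $\harm(k)$ (the condition $\int_x^{[\gamma].x} j_k^{\ast} p_k^{\ast}\omega \in \Z$ for all $[\gamma] \in \mathscr{G}_k$ is exactly the condition that $\chi_\omega$ be trivial on $\mathscr{G}_k$, by definition of $\harm(k)$; and triviality on all periods of all cycles corresponds to $\omega \in \harm(\Z)$). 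Combining the two steps yields the direct sum decomposition.

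Finally, for the spectral statement, recall that by the discussion in \S\ref{sec:spec_around_zero} the Casimir operator $\square_\omega$ on $L^2(M_{\chi_\omega})$ has discrete nonnegative spectrum $\{\lambda_n(\omega)\}_{n \in \N}$ (on each irreducible subrepresentation it acts as a constant). Since $\square_k$ preserves each summand $L^2(M_{\chi_\omega})$ and restricts to $\square_\omega$ there, the nonnegative part of $\Spec(\square_k)$ is the union of these eigenvalues over $\omega \in \harm(k)/\harm(\Z)$. I do not expect any real obstacle here: the only point that requires a moment's care is checking that $\harm(k) \supseteq \harm(\Z)$ and that the quotient is indeed the finite group $\prod_i (1/N_i^{(k)})\Z/\Z$, which follows directly from the description $\Gamma_k = \psi^{-1}(N_1^{(k)}\Z \times \cdots \times N_d^{(k)}\Z)$ together with the duality between cycles and periods.
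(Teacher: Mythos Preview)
Your proposal is correct and follows essentially the same approach as the paper. The paper's proof is terser: it writes down the inverse map $\Phi\colon (s_\omega)_\omega \mapsto \frac{1}{|\mathscr{G}_k|}\sum_\omega s_\omega$ and asserts it is an isometry with dense image, whereas you spell out the forward direction via character projections; these are the two sides of the same finite Fourier decomposition, and the identification of $\widehat{\mathscr{G}}_k$ with $\harm(k)/\harm(\Z)$ is handled in the paper in the paragraph immediately preceding the lemma, exactly as you describe.
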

\begin{proof}
  Let $ N_j := \abs{\Gal_k} = N_1^{(k)} \cdots N_d^{(k)} $. It is not
  hard to see that the map
  \[
    \begin{split}
      \Phi \colon \bigoplus_{\omega \in \harm(k)/\harm(\Z)} L^2(\Mchio)
      & \to L^2(M_k) \\
      \left( s_\omega \right)_{\omega \in \harm(k)/\harm(\Z)} &\mapsto
      \frac{1}{N_j} \sum_{\omega \in \harm(k)/\harm(\Z)} s_\omega
    \end{split}
  \] is a well-defined isometry whose image contains the dense subspace
  of smooth functions $ \mathscr{C}^\infty(M_j) $, from which the
  conclusion follows.
\end{proof}

By Lemma~\ref{thm:other_eigenvalues}, there exists $ \delta >0 $ such that $
\lambda_0(\omega) $ is analytic for all $ \omega $ such that $ |\lambda_0
(\omega)|<\delta $, and $ \lambda_n(\omega) > \delta $ for all $ n \geq
1 $ and all $ \omega \in \harm(k) / \harm(\Z) $. In particular,
\[
  \Spec(\square_j) \cap [0, \delta] = \{ \lambda_0(\omega) :  \omega \in
  \harm(k) / \harm(\Z) \} \cap [0, \delta].
\]

Let $ f $ be a continuous function compactly supported in $ [0,\delta) $.
We have
\[
  \frac{1}{| \Gamma / \Gamma_k |} \sum_{\lambda \in \Spec(\square_k)
  \cap [0, \delta]} f(\lambda) = \frac{1}{| \Gal_k |} \sum_{\omega \in
  \harm(k) / \harm(\Z)} f(\lambda_0(\omega)),
\] so that, by equidistribution of the points $ \omega \in \harm(k) /
\harm(\Z) $ in the torus $ \harm / \harm(\Z) $,
\[
  \begin{split}
    \lim_{k \to \infty} \frac{1}{| \Gamma / \Gamma_k |} \sum_{\lambda
    \in \Spec(\square_k) \cap [0, \delta]} f(\lambda) &= \int_{\harm /
    \harm(\Z)} f(\lambda_0(\omega)) \diff \omega \\
    &= \int_{\lambda_0^{-1}([0,\delta])} f(\lambda_0(\omega )) \diff
    \omega.
  \end{split}
\] Let us rewrite the integral on the right.  By the Morse Lemma (as in
the proof of Lemma~\ref{thm:stationary_phase}), up to choosing a smaller $ \delta $, we can
find a change of variable $ \omega = \Psi(\mathbf{y}) $, with $ \mathbf{y}
\in \R^d $, such that $ \lambda_0( \Psi(\mathbf{y})) = y_1^2 + \cdots +
y_d^2 $.  In particular
\begin{equation*}
  \begin{split}
    \int_{\lambda_0^{-1}([0,\delta])} f(\lambda_0(\omega)) \diff \omega
    &= \int_{\Psi^{-1} (\lambda_0^{-1}([0,\delta]))} f \left(y_1^2 +
    \cdots + y_d^2 \right) \Psi'(\mathbf{y}) \diff \mathbf{y} \\
    &= \int_{|\mathbf{y}|^2< \delta} f \left(y_1^2 + \cdots + y_d^2
    \right) \widehat{\Psi}(\mathbf{y}) \diff \mathbf{y},
  \end{split}
\end{equation*}
where
\[
  \widehat{\Psi}(\mathbf{y}) = \Psi'(\mathbf{y}) \one_{\Psi^{-1} (\lambda_0^
  {-1}([0,\delta]))}(\mathbf{y}).
\] Passing to polar coordinates, we can write
\[
  \int_{|\mathbf{y}|^2< \delta} f \left(y_1^2 + \cdots + y_d^2 \right)
  \widehat{\Psi}(\mathbf{y}) \diff \mathbf{y} = \int_0^{\sqrt{\delta}} f
  (r^2) r^{d-1} {\zeta}(r) \diff r,
\] for a function $ \zeta\in L^{\infty}([0, \delta]) $.  Finally,
setting $ x=r^2 $, we conclude
\[
  \lim_{k \to \infty} \frac{1}{| \Gamma / \Gamma_k |} \sum_{\lambda \in
  \Spec(\square_k) \cap [0, \delta]} f(\lambda) = \int_0^{\delta} f(x) x^
  {\frac{d}{2}-1} \frac{\zeta(\sqrt{x})}{2} \diff x.
\] Thus, the proof of Theorem~\ref{thm:main2} is complete.

\subsection*{Acknowledgements}
We would like to thank the referees for their comments which helped 
improving the presentation of the paper.

%%%%%%%%%%

\end{document}